\newtheorem{thm}{Theorem}
\newtheorem{defn}[thm]{Definition}
\newtheorem{lem}[thm]{Lemma}
\newtheorem{cor}[thm]{Corollary}
\newtheorem{exam}[thm]{Example}
\newtheorem{prop}[thm]{Proposition}
\newtheorem{rem}[thm]{Remark}
\DeclareMathOperator{\inte}{Int}
\begin{document}

\centerline {\Large\textbf{Remarks on Ultrametrics and Metric-Preserving Functions}}
\vskip.8cm \centerline {\textbf{Prapanpong Pongsriiam}$^a$\footnote{ Corresponding author} and \textbf{Imchit Termwuttipong}$^b$}

\vskip.5cm
% Address 1 -------------------------------------------------------------------
\centerline {$^a$Department of Mathematics, Faculty of Science}
\centerline {Silpakorn University, Nakhon Pathom, 73000, Thailand}
\centerline {{\tt prapanpong@gmail.com}}
\centerline {$^b$Department of Mathematics and Computer Science, Faculty of Science}
\centerline {Chulalongkorn University, Bangkok, 10330, Thailand}
\centerline {{\tt  Imchit.T@chula.ac.th}}

\begin{abstract}
Functions whose composition with every metric is a metric are said to be metric-preserving. In this article, we investigate a variation of the concept of metric-preserving functions where metrics are replaced by ultrametrics.
\end{abstract}
\noindent \textbf{Keywords} Metric; Ultrametric; Metric-preserving function; Continuity
\section{Introduction}
\indent Under what conditions on a function $f:[0,\infty)\to [0,\infty)$ is it the case that for every metric space $(X,d)$, $f\circ d$ is still a metric? It is well-known that for any metric $d$, $\frac{d}{1+d}$ and $\min\{1,d\}$ are bounded metrics topologically equivalent to $d$, while $\frac{d}{1+d^2}$ need not be a metric. \\
\indent We call $f:[0,\infty)\to[0,\infty)$ \textit{metric-preserving} if for all metric spaces $(X,d)$, $f\circ d$ is a metric. Therefore the functions $f$ and $g$ given by $f(x) = \frac{x}{1+x}$ and $g(x) = \min\{1,x\}$ are metric-preserving but $h(x) = \frac{x}{1+x^2}$ is not. The concept of metric-preserving functions first appears in Wilson's article \cite{Wi} and is thoroughly investigated by many authors, see for example, \cite{BD, BD1, C, Da, Do2, Do, Do1, DP, DP1, DP2, Ke, PRS, PV, Po, Sr, TO, V} and references therein.\\
\indent However, other important types of distances such as ultrametrics, pseudometrics, pseudodistances \cite{WP1, WP2}, $w$-distances, and $\tau$-distances have not yet been developed in the connection with metric-preserving functions. These distances have many applications in mathematics, see for example, applications of $w$-distances and $\tau$-distances in \cite{KST, KT, MRT, S1, S3, S2, ST}. We will particularly concern with the ultrametrics which arise naturally in the study of $p$-adic numbers and non-archimedean analysis \cite{BGR, DNS}, topology and dynamical system \cite{CRZ, F, MN, Y}, topological algebra \cite{Co}, and theoretical computer science \cite{PR}.\\
\indent In connection with ultrametrics and metric-preserving functions, the problem arises to investigate the properties of the following functions and compare them with those of metric-preserving functions.
\begin{defn}
Let $f:[0,\infty)\to[0,\infty)$. We say that 
\begin{itemize}
\item[(i)] $f$ is ultrametric-preserving if for all ultrametric spaces $(X,d)$, $f\circ d$ is an ultrametric,
\item[(ii)] $f$ is metric-ultrametric-preserving if for all metric spaces $(X,d)$, $f\circ d$ is an ultrametric, and
\item[(iii)] $f$ is ultrametric-metric-preserving if for all ultrametric spaces $(X,d)$, $f\circ d$ is a metric.
\end{itemize}
For convenience, we also let $\mathcal M$ be the set of all metric-preserving functions, $\mathcal U$ the set of all ultrametric-preserving functions, $\mathcal {UM}$ the set of all ultrametric-metric-preserving functions, and $\mathcal {MU}$ the set of all metric-ultrametric-preserving functions.
\end{defn}
\indent We will give some basic definitions and useful results that will be used throughout this article in the next section. We then give properties and characterizations of those functions in Sections \ref{section3}, \ref{section4}, and \ref{section5}. We discuss and give some results on the continuity aspect of those functions in Section \ref{section6}.
%%%%%%%%%%%%%%%%%%%%%%%%%%%%%%%%%%%%%555
%%%%%%%%%%%%%%%%%%%%%%%%%%%%%%%%%%%%%%%%%%%
\section{Preliminaries and Lemmas}
In this section, we give some basic definitions and results for the convenience of the reader. First, we recall the definition of a metric space and an ultrametric space.\\
\indent A \textit{metric space} is a set $X$ together with a function $d:X\times X\to[0,\infty)$ satisfying the following three conditions:
\begin{itemize}
\item[(M1)] For all $x, y\in X$, $d(x,y)=0$ if and only if $x=y$,
\item[(M2)] for all $x, y\in X$, $d(x,y) = d(y,x)$, and
\item[(M3)] for all $x, y, z\in X$, $d(x,y)\leq d(x,z)+d(z,y)$.
\end{itemize}
An \textit{ultrametric space} is a metric space $(X,d)$ satisfying the stronger inequality (called the ultrametric inequality):
\begin{itemize}
\item[(U3)] For all $x, y, z\in X$, $d(x,y)\leq \max\{d(x,z),d(z,y)\}$.
\end{itemize}
A metric space $(X,d)$ is said to be \textit{topologically discrete} if for every $x\in X$ there is an $\varepsilon > 0$ such that $B_d(x,\varepsilon) = \{x\}$ where $B_d(x,\varepsilon)$ denote the open ball center at $x$ and radius $\varepsilon$. In addition, $(X,d)$ is said to be \textit{uniformly discrete} if there exists an $\varepsilon > 0$ such that $B_d(x,\varepsilon) = \{x\}$ for every $x\in X$.\\ 
\indent Next we recall the definitions concerning certain behaviors of functions. Throughout, we let $f:[0,\infty)\to[0,\infty)$ and let $I\subseteq [0,\infty)$. Then $f$ is said to be \textit{increasing} on $I\subseteq [0,\infty)$ if $f(x)\leq f(y)$ for all $x, y\in I$ satisfying $x < y$, and $f$ is said to be \textit{strictly increasing} on $I\subseteq [0,\infty)$ if $f(x) < f(y)$ for all $x, y\in I$ satisfying $x < y$. The notion of \textit{decreasing} or \textit{strictly decreasing} functions is defined similarly.\\
\indent The function $f$ is said to be \textit{amenable} if $f^{-1}(\{0\}) = \{0\}$, and $f$ is said to be \textit{tightly bounded} on $(0,\infty)$ if there is $v>0$ such that $f(x) \in [v,2v]$ for all $x>0$. We say that $f$ is \textit{subadditive} if $f(a+b)\leq f(a)+f(b)$ for all $a, b\in [0,\infty)$, $f$ is \textit{convex} if $f((1-t)x_1+tx_2) \leq (1-t)f(x_1)+tf(x_2)$ for all $x_1, x_2\in [0,\infty)$ and $t\in [0,1]$, and $f$ is \textit{concave} if $f((1-t)x_1+tx_2) \geq (1-t)f(x_1)+tf(x_2)$ for all $x_1, x_2\in [0,\infty)$ and $t\in [0,1]$. As mentioned earlier, we say that $f$ is \textit{metric-preserving} if for all metric spaces $(X,d)$, $f\circ d$ is a metric. Furthermore, $f$ is \textit{strongly metric-preserving} if $f\circ d$ is a metric equivalent to $d$ for every metric $d$.\\
\indent Now we are ready to state the results which will be applied in the proof of our theorems.
%%%%%%%%%%%%%%%%%%%%%%%%%%%%%%%%%%%%%%%
\begin{lem}\label{amensubincreasgotmetric}
Let $f:[0,\infty)\to[0,\infty)$. If $f$ is amenable, subadditive and increasing on $[0,\infty)$, then $f$ is metric-preserving.
\end{lem}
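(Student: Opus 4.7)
The plan is to fix an arbitrary metric space $(X,d)$ and verify directly that $f\circ d$ satisfies the three metric axioms (M1), (M2), (M3). Each axiom corresponds cleanly to one of the hypotheses on $f$ (amenability, the symmetry carried over from $d$, and the combination of increasingness and subadditivity), so the proof decomposes into three independent, short checks.

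First, I would handle (M1). Since $d$ is a metric, $d(x,y)=0$ exactly when $x=y$. Because $f$ is amenable, $f(t)=0$ exactly when $t=0$. Composing these equivalences gives $(f\circ d)(x,y)=0 \iff d(x,y)=0 \iff x=y$. The symmetry axiom (M2) is immediate from the symmetry of $d$, with no property of $f$ required beyond being well-defined: $f(d(x,y)) = f(d(y,x))$.

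The main content of the proof is the triangle inequality (M3), and this is the step where all three hypotheses on $f$ are needed jointly. Given $x,y,z\in X$, start from $d(x,y)\le d(x,z)+d(z,y)$, which holds because $d$ itself is a metric. Apply $f$ to both sides: since $f$ is increasing on $[0,\infty)$, this yields
\[
f(d(x,y)) \le f\bigl(d(x,z)+d(z,y)\bigr).
\]
Then apply subadditivity of $f$ to the right-hand side to obtain
\[
f\bigl(d(x,z)+d(z,y)\bigr) \le f(d(x,z)) + f(d(z,y)).
\]
Chaining the two inequalities gives $(f\circ d)(x,y)\le (f\circ d)(x,z)+(f\circ d)(z,y)$, as required. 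Finally, one should note that $f\circ d$ takes values in $[0,\infty)$ because $f$ does, so $f\circ d$ is genuinely a function into $[0,\infty)$.

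I do not anticipate any real obstacle; the statement is essentially a bookkeeping exercise converting the three assumptions on $f$ into the three metric axioms. The only subtlety worth flagging is that increasingness here is used in its weak (non-strict) form, so that the inference from $d(x,y)\le d(x,z)+d(z,y)$ to $f(d(x,y))\le f(d(x,z)+d(z,y))$ is valid even when the inner inequality is an equality.
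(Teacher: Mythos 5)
Your proof is correct and complete: amenability gives (M1), symmetry of $d$ gives (M2), and the chain $f(d(x,y))\le f(d(x,z)+d(z,y))\le f(d(x,z))+f(d(z,y))$ (using monotonicity then subadditivity) gives (M3). The paper does not spell out a proof of this lemma, deferring instead to the cited references, but your argument is exactly the standard one found there, so there is nothing to add.
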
   
\begin{proof}
The proof can be found, for example, in \cite{C, Do2}.
\end{proof}
%%%%%%%%%%%%%%%%%%%%%%%%%%%%%%%%%%%%%%%%5
%%%%%%%%%%%%%%%%%%%%%%%%%%%%%%%%%%%%%%%%
\begin{lem}\label{amentightgotmetric}
If $f:[0,\infty)\to[0,\infty)$ is amenable and tightly bounded, then $f$ is metric-preserving.
\end{lem}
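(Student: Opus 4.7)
The plan is to verify the three metric axioms (M1), (M2), (M3) for $f \circ d$ on an arbitrary metric space $(X,d)$, using amenability for (M1), symmetry of $d$ for (M2), and the tight-boundedness bracket $[v,2v]$ for (M3). Axiom (M1) is immediate: since $f^{-1}(\{0\}) = \{0\}$, we have $f(d(x,y)) = 0 \iff d(x,y) = 0 \iff x = y$. Axiom (M2) follows directly from $d(x,y) = d(y,x)$.

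The main content is (M3): given $x, y, z \in X$, I must show
\[
f(d(x,y)) \leq f(d(x,z)) + f(d(z,y)).
\]
First I would dispose of the degenerate case $x = y$, where the left side is $0$. For the case $x \neq y$, I would split according to whether $z$ coincides with $x$ or $y$. If $z = x$, then $f(d(x,z)) = 0$ and $f(d(z,y)) = f(d(x,y))$, so equality holds; the case $z = y$ is symmetric. The remaining subcase is $z \notin \{x,y\}$, so that $d(x,z) > 0$ and $d(z,y) > 0$. Here tight boundedness gives $f(d(x,z)) \geq v$ and $f(d(z,y)) \geq v$, hence
\[
f(d(x,z)) + f(d(z,y)) \geq 2v \geq f(d(x,y)),
\]
where the last inequality uses $d(x,y) > 0$ and the upper bound from $f(d(x,y)) \in [v,2v]$.

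There is no real obstacle here; the only mild subtlety is remembering that tight boundedness applies only to positive arguments, which is precisely why the case split on the coincidences of $x, y, z$ is necessary. The proof notably does not require monotonicity, continuity, or subadditivity of $f$ — it is a pure ``two-level'' argument in which every positive value of $f \circ d$ lies in $[v,2v]$ and every zero value comes from $d$ vanishing, so the triangle inequality cannot fail.
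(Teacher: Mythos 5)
Your proof is correct and complete. The paper does not actually reproduce an argument for this lemma --- it only cites the literature (Bors\'ik--Dobo\v s and Corazza) --- and your case analysis is precisely the standard proof found there: amenability handles (M1), and for (M3) the only nontrivial case is when $x$, $y$, $z$ are pairwise distinct, where $f(d(x,z))+f(d(z,y))\geq 2v\geq f(d(x,y))$. Your observation that the triangle inequality of $d$ itself is never used is also accurate and worth noting.
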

\begin{proof}
The proof can be found, for example, in \cite{BD1, C}.
\end{proof}
%%%%%%%%%%%%%%%%%%%%%%%%%%%%%%%%%%%%%%%%%
The next lemma might be less well-known, so we give a proof here for completeness.
\begin{lem}\label{amencavegotdecreas}
If $f:[0,\infty)\to[0,\infty)$ is amenable and concave, then the function $x\mapsto \frac{f(x)}{x}$ is decreasing on $(0,\infty)$
\end{lem}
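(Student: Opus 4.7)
The plan is to exploit concavity by writing the smaller argument as a convex combination of $0$ and the larger argument, and then use amenability to kill the $f(0)$ term. More precisely, fix $0 < x < y$ and set $t = x/y \in (0,1)$, so that $x = (1-t)\cdot 0 + t\cdot y$. Applying the definition of concavity at the points $x_1 = 0$, $x_2 = y$ with this $t$ gives
\[
f(x) \;=\; f\bigl((1-t)\cdot 0 + t\cdot y\bigr) \;\geq\; (1-t)f(0) + t\,f(y).
\]

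Next I would invoke amenability, which forces $f(0) = 0$ (since $0 \in f^{-1}(\{0\}) = \{0\}$). The displayed inequality then simplifies to $f(x) \geq t\,f(y) = (x/y)\,f(y)$. Dividing through by $x > 0$ yields
\[
\frac{f(x)}{x} \;\geq\; \frac{f(y)}{y},
\]
which is exactly the statement that $x\mapsto f(x)/x$ is decreasing on $(0,\infty)$.

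There is essentially no obstacle here; the only subtlety worth flagging is that one must actually use amenability (not merely $f(0) \leq 0$, which would also suffice since $f$ takes nonnegative values) to discard the $(1-t)f(0)$ term, and one must note $x > 0$ before dividing. The argument is a one-line application of concavity once the correct convex combination is identified.
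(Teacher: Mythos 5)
Your argument is correct and is essentially identical to the paper's own proof: both write the smaller argument as the convex combination $\left(1-\frac{x}{y}\right)\cdot 0 + \frac{x}{y}\cdot y$, apply concavity, use amenability to discard the $f(0)$ term, and divide by $x>0$. Nothing further is needed.
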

\begin{proof}
Let $a, b\in (0,\infty)$ and $a<b$. Since $f$ is concave, we obtain
$$
f(a) = f\left(\left(1-\frac{a}{b}\right)(0)+\left(\frac{a}{b}\right)(b)\right)\geq \left(1-\frac{a}{b}\right)f(0)+\frac{a}{b}f(b) = \frac{a}{b}f(b). 
$$
Therefore $\frac{f(a)}{a} \geq \frac{f(b)}{b}$, as desired.
\end{proof}
%%%%%%%%%%%%%%%%%%%%%%%%%%%%%%%%%%%%%%%%%%%%
\begin{lem}\label{newtypelemma1}
Let $(X,d)$ be an ultrametric space. Then for every $x_1, x_2, \ldots, x_n\in X$,
$$
d(x_1,x_n) \leq \max\left\{d(x_1,x_2), d(x_2,x_3),\ldots, d(x_{n-1},x_n)\right\}.
$$
\end{lem}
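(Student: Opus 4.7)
The plan is to prove this by straightforward induction on $n$, the number of points.

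The base case is $n=2$, which is trivial since $d(x_1,x_2)\le d(x_1,x_2)$. (One could instead take $n=3$ as the base case, which is exactly the ultrametric inequality (U3) given in the Preliminaries.)

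For the inductive step, I assume the inequality holds for any $n$ points in $X$, and consider $x_1,x_2,\ldots,x_{n+1}\in X$. First I apply (U3) with the intermediate point $x_n$ to get
$$d(x_1,x_{n+1}) \le \max\bigl\{d(x_1,x_n),\, d(x_n,x_{n+1})\bigr\}.$$
Then I apply the inductive hypothesis to the $n$ points $x_1,x_2,\ldots,x_n$ to bound the first term:
$$d(x_1,x_n) \le \max\bigl\{d(x_1,x_2),\, d(x_2,x_3),\ldots, d(x_{n-1},x_n)\bigr\}.$$
Substituting this into the previous inequality and absorbing $d(x_n,x_{n+1})$ into the maximum yields the desired bound over all $n$ consecutive pairwise distances.

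There is no real obstacle here: the only thing to be mindful of is the trivial observation that $\max\{\max A, b\} = \max(A\cup\{b\})$, which is what lets the two applications of the ultrametric inequality combine cleanly. The lemma is essentially saying that in an ultrametric space, the ``diameter'' of a chain is controlled by the largest single step, which follows by iterating (U3).
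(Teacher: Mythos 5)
Your proof is correct and takes essentially the same approach as the paper: the paper also just iterates the ultrametric inequality (U3) along the chain, writing out the first two steps and invoking ``repeated application,'' which your induction merely formalizes (peeling off the last point rather than the first, an immaterial difference).
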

\begin{proof}
We have
\begin{align*}
d(x_1,x_n) &\leq \max\left\{d(x_1,x_2), d(x_2,x_n)\right\}\\
&\leq \max\left\{d(x_1,x_2), \max\left\{d(x_2,x_3), d(x_3,x_n)\right\}\right\}\\
&= \max\left\{d(x_1,x_2), d(x_2,x_3), d(x_3,x_n)\right\}.
\end{align*}
A repeated application of the ultrametric inequality as above gives the desired result.
\end{proof}
%%%%%%%%%%%%%%%%%%%%%%%%%%%%%%%%%%%
Next we give basic relations and properties of the functions in $\mathcal M$, $\mathcal U$, $\mathcal{MU}$, and $\mathcal{UM}$.
%%%%%%%%%%%%%%%%%%%%%%%%%%%%%%%%%%%%%%
%%%%%%%%%%%%%%%%%%%%%%%%%%%%%%%%%%%%%%%
%%%%%%%%%%%%%%%%%%%%%
%%%%%%%%%%%%%%%%%%%%%%%
%%%%%%%%%%%%%%%%%%%%
%%%%%%%%%%%%%%%%%%%%%%
\begin{prop}\label{newtypeprop1}
$\mathcal {MU}\stackrel{\text{(S1)}}{\subseteq} \mathcal{U}\cap \mathcal{M}\stackrel{\text{(S2)}}{\subseteq} \mathcal{U}, \mathcal{M}\stackrel{\text{(S3)}}{\subseteq}\mathcal{U}\cup \mathcal{M}\stackrel{\text{(S4)}}{\subseteq} \mathcal{UM}$.
\end{prop}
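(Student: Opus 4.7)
The plan is to verify the four inclusions separately. Inclusions (S2) and (S3) are immediate from elementary set theory: $\mathcal{U}\cap\mathcal{M}$ is contained in each of its factors, and each of $\mathcal{U},\mathcal{M}$ sits inside $\mathcal{U}\cup\mathcal{M}$. So the only content lies in (S1) and (S4), both of which rest on the single structural observation that every ultrametric is automatically a metric (since the ultrametric inequality (U3) implies the triangle inequality (M3)). I would flag this once at the outset and then use it freely.

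For (S1), I would take $f\in\mathcal{MU}$ and show membership in $\mathcal{M}$ and in $\mathcal{U}$ separately. For $\mathcal{M}$: given any metric $d$, the hypothesis $f\in\mathcal{MU}$ yields that $f\circ d$ is an ultrametric, hence in particular a metric. For $\mathcal{U}$: given any ultrametric $d$, since $d$ is also a metric the same hypothesis gives that $f\circ d$ is an ultrametric. Together these show $f\in\mathcal{U}\cap\mathcal{M}$.

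For (S4), I would take $f\in\mathcal{U}\cup\mathcal{M}$ and an arbitrary ultrametric space $(X,d)$, and check that $f\circ d$ is a metric on $X$ by splitting into two cases. If $f\in\mathcal{M}$, then because $d$ is a metric, $f\circ d$ is a metric. If $f\in\mathcal{U}$, then $f\circ d$ is an ultrametric, which is again a metric. In either case $f\in\mathcal{UM}$.

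There is no genuine obstacle here; the proposition is a bookkeeping statement recording how the four classes sit relative to one another, and the only thing one has to be careful about is not conflating the roles of ``input'' and ``output'' metrics in the four definitions. Once the asymmetry between input (ultrametric vs.\ metric) and output (ultrametric vs.\ metric) is kept straight, the argument is a direct unpacking of the definitions together with the inclusion of ultrametric spaces among metric spaces.
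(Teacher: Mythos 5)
Your proposal is correct and follows essentially the same route as the paper: both arguments reduce everything to the single observation that every ultrametric is a metric, dispose of (S2) and (S3) as pure set theory, and unpack the definitions for (S1) and (S4). Your version merely spells out the input/output case analysis that the paper leaves implicit.
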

\begin{proof}
Since an ultrametric is a metric, $\mathcal{MU}\subseteq \mathcal {U}$ and $\mathcal{MU}\subseteq \mathcal {M}$. So (S1) follows. Similarly, $\mathcal{U}\subseteq \mathcal {UM}$ and $\mathcal{M}\subseteq \mathcal {UM}$, so (S4) holds. (S2) and (S3) are true in general. 
\end{proof} 
%%%%%%%%%%%%%%%%%%%%%%%%%%%%%%%%%%%%%%%
We will obtain characterization of the functions in $\mathcal{U}$, $\mathcal{MU}$ and $\mathcal{UM}$ in later section. Then we will show that the relation $\subseteq$ in Proposition \ref{newtypeprop1} is in fact a proper subset. It is easy to see that if $f\in \mathcal M$, then $f$ is amenable. We extend this to the case of any function $f\in \mathcal M\cup \mathcal{MU}\cup\mathcal {UM}\cup \mathcal U$.
%%%%%%%%%%%%%%%%%%%%%%%%%%%%%%%%%%%%%%%%%%%%%%%%x>0
\begin{prop}\label{amenablelemma}
If $f\in \mathcal {UM}$, then $f$ is amenable.
\end{prop}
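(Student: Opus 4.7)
The plan is to verify the two parts of amenability separately: (a) $f(0)=0$, and (b) $f(x)>0$ whenever $x>0$. Both will follow by plugging $f$ into very small, explicitly chosen ultrametric spaces and then reading off axiom (M1) of the resulting metric $f\circ d$.

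For (a), I pick any ultrametric space $(X,d)$ (for instance a singleton, or the two-point space below). Since $f\in\mathcal{UM}$, $f\circ d$ is a metric, so by (M1) applied to $(x,x)$ we get $f(d(x,x))=0$, i.e.\ $f(0)=0$.

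For (b), I argue by contradiction: suppose there is some $a>0$ with $f(a)=0$. I then produce an ultrametric that takes the value $a$ on a pair of distinct points. The cleanest choice is the two-point space $X=\{p,q\}$ with $p\neq q$ and
\[
d(p,p)=d(q,q)=0,\qquad d(p,q)=d(q,p)=a.
\]
Symmetry (M2) is immediate, and since only three distinct triples of points exist (all involving repetition), both the triangle inequality (M3) and the stronger ultrametric inequality (U3) hold trivially. Hence $(X,d)$ is an ultrametric space. Because $f\in\mathcal{UM}$, $f\circ d$ is a metric on $X$. But $(f\circ d)(p,q)=f(a)=0$ while $p\neq q$, contradicting (M1) for $f\circ d$. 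Therefore $f(x)\neq 0$, and hence $f(x)>0$, for every $x>0$.

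Combining (a) and (b) yields $f^{-1}(\{0\})=\{0\}$, i.e.\ $f$ is amenable. There is no real obstacle here; the only thing to notice is that one is free to scale the two-point ultrametric to any prescribed positive distance, which is what lets us hit an arbitrary $a>0$. The same argument, incidentally, handles the analogous claim for $\mathcal M$, $\mathcal U$, and $\mathcal{MU}$, which explains the remark preceding the proposition.
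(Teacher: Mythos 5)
Your proof is correct and follows essentially the same strategy as the paper: realize an arbitrary value $a>0$ as the distance between two distinct points of a concrete ultrametric space, then invoke axiom (M1) for $f\circ d$ to force $f(a)>0$. The only cosmetic difference is that you use an abstract two-point space $\{p,q\}$ with $d(p,q)=a$, whereas the paper uses an equilateral triangle of side $x$ in $\mathbb{R}^2$; both are trivially ultrametric spaces and the argument is otherwise identical.
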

\begin{proof}
Assume that $f\in \mathcal {UM}$. To show that $f$ is amenable, we let $x\in [0,\infty)$ be such that $f(x)=0$. Let $X = \{A, B, C\}\subseteq \mathbb R^2$ where $A = \left(-\frac{x}{2},0\right)$, $B = \left(\frac{x}{2},0\right)$, and $C = \left(0,\frac{\sqrt 3x}{2}\right)$. Let $d_2$ be the Euclidean metric on $\mathbb R^2$ and let $d = d_2\mid_X$ be the restriction of $d_2$ on $X$. Then $d(A,B) = d(A,C) = d(B,C) = x$. Therefore $(X,d)$ is an ultrametric space. So $f\circ d$ is a metric on $X$. Now $f(0) = f(d(A,A)) = (f\circ d)(A,A) = 0$, and $(f\circ d)(A,B) = f(d(A,B)) = f(x) = 0$, which implies $A = B$. That is $\left(-\frac{x}{2},0\right) = \left(\frac{x}{2},0\right)$. Hence $x=0$. This shows that $f$ is amenable as desired.
\end{proof}
%%%%%%%%%%
%%%%%%%%%%
\begin{cor}\label{corafteramenablelemma}
If $f:[0,\infty)\to [0,\infty)$ is in $\mathcal M$, $\mathcal{MU}$, $\mathcal U$, or $\mathcal{UM}$, then $f$ is amenable.
\end{cor}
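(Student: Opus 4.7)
The plan is to derive this corollary as an immediate consequence of Proposition \ref{amenablelemma} together with the containment chain in Proposition \ref{newtypeprop1}. Proposition \ref{amenablelemma} already handles the class $\mathcal{UM}$, so all I need is to observe that the other three classes are subsets of $\mathcal{UM}$.

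Concretely, Proposition \ref{newtypeprop1} gives the chain
$$\mathcal{MU}\subseteq \mathcal{U}\cap\mathcal{M}\subseteq \mathcal{U},\mathcal{M}\subseteq \mathcal{U}\cup\mathcal{M}\subseteq \mathcal{UM},$$
from which I read off $\mathcal{M}\subseteq \mathcal{UM}$, $\mathcal{U}\subseteq \mathcal{UM}$, and $\mathcal{MU}\subseteq \mathcal{UM}$. Thus any $f$ belonging to any one of the four classes $\mathcal{M}$, $\mathcal{MU}$, $\mathcal{U}$, $\mathcal{UM}$ lies in $\mathcal{UM}$, and Proposition \ref{amenablelemma} then forces $f$ to be amenable.

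There is essentially no obstacle here: the work has already been done in the two previous results. The corollary is really just a bookkeeping statement that packages Proposition \ref{amenablelemma} together with the subset relations so that later sections can invoke amenability uniformly across all four function classes without having to cite both propositions each time.
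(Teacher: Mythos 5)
Your proposal is correct and matches the paper's own argument: the paper likewise observes (via Proposition \ref{newtypeprop1}) that $\mathcal M\cup\mathcal{MU}\cup\mathcal U\cup\mathcal{UM}=\mathcal{UM}$ and then invokes Proposition \ref{amenablelemma}. There is nothing to add.
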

\begin{proof}
By Proposition \ref{newtypeprop1}, $\mathcal M \cup\mathcal{MU}\cup \mathcal U \cup\mathcal{UM} = \mathcal{UM}$. So the result follows from Proposition \ref{amenablelemma}.
\end{proof}
%%%%%%%%%%
%%%%%%%%%%
%%%%%%%%%%%
\section{Ultrametric-Preserving Functions}\label{section3}
In this section, we obtain characterizations of ultrametric-preserving functions. Then we compare their properties with those of metric-preserving functions.
\begin{thm}\label{ultraiffamenableincreas}
Let $f:[0,\infty)\to[0,\infty)$. Then $f$ is ultrametric-preserving if and only if $f$ is amenable and increasing.
\end{thm}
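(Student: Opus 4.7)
The plan is to prove the biconditional by establishing both directions separately, with the backward direction being essentially a direct verification and the forward direction requiring a small construction. Amenability in the forward direction will come for free from Corollary \ref{corafteramenablelemma}, since $\mathcal{U}\subseteq\mathcal{UM}$, so the real work in the forward direction is showing monotonicity.

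For the backward direction, I will assume $f$ is amenable and increasing, fix an arbitrary ultrametric space $(X,d)$, and verify that $f\circ d$ satisfies (M1), (M2), and (U3). Symmetry (M2) is immediate from symmetry of $d$, and (M1) follows from amenability: $f(d(x,y))=0$ iff $d(x,y)=0$ iff $x=y$. The key step is (U3), for which I will use the elementary fact that an increasing function $f:[0,\infty)\to[0,\infty)$ satisfies $f(\max\{a,b\})=\max\{f(a),f(b)\}$. Applying $f$ to both sides of $d(x,y)\leq \max\{d(x,z),d(z,y)\}$ then yields $(f\circ d)(x,y)\leq \max\{(f\circ d)(x,z),(f\circ d)(z,y)\}$, as required.

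For the forward direction, I will assume $f\in\mathcal U$ and show $f$ is increasing. Amenability is already noted. To check monotonicity, take $0\leq a<b$; the case $a=0$ is trivial by amenability, so assume $0<a<b$. I will exhibit a three-point ultrametric space realizing the distances $a$ and $b$ together: take $X=\{x,y,z\}$ and define $d(x,y)=a$, $d(x,z)=d(y,z)=b$ (and $d(p,p)=0$, $d$ symmetric). A short case check shows every triple satisfies the ultrametric inequality, so $(X,d)$ is an ultrametric space. Because $f\in\mathcal U$, $f\circ d$ is an ultrametric on $X$; in particular, $f(a)=(f\circ d)(x,y)\leq \max\{(f\circ d)(x,z),(f\circ d)(z,y)\}=\max\{f(b),f(b)\}=f(b)$.

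The main (mild) obstacle is the forward direction: one must select a three-point ultrametric witnessing the comparison $f(a)\leq f(b)$, and verify the ultrametric inequality holds for that $d$. Everything else is bookkeeping. Notably, once (U3) is established for $f\circ d$ in the backward direction, the triangle inequality (M3) comes for free, so there is no need to verify (M3) separately.
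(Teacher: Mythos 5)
Your proposal is correct and follows essentially the same route as the paper: amenability via Corollary \ref{corafteramenablelemma}, monotonicity from a three-point ultrametric space realizing the isoceles triple $(a,b,b)$, and the converse by applying monotonicity to the ultrametric inequality. The only cosmetic difference is that you define the three-point ultrametric abstractly and check (U3) directly, whereas the paper realizes it as a triangle in the Euclidean plane; both are fine.
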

\begin{proof}
Assume that $f$ is ultrametric-preserving. By Corollary \ref{corafteramenablelemma}, it suffices to show that $f$ is increasing. Let $a, b\in [0,\infty)$ and $a<b$. Let $d_2$ be the Euclidean metric on $\mathbb R^2$ and let $X = \{A, B, C\}\subseteq \mathbb R^2$ where $A = \left(-\frac{a}{2},0\right)$, $B = \left(\frac{a}{2},0\right)$, and $C = \left(0,\sqrt{\frac{4b^2-a^2}{4}}\right)$. Let $d = d_2\mid_X$ be the restriction of $d_2$ on $X$. Then $d(A,B) = a$, $d(A,C) = d(B,C) = b$. Therefore $(X,d)$ is an ultrametric space. Since $f$ is ultrametric-preserving, $f\circ d$ is an ultrametric. Therefore 
$$
f(a) = f\circ d(A,B) \leq \max\{f\circ d(A,C), f\circ d(B,C)\} = f(b),
$$
as required. Next assume that $f$ is increasing and amenable. Let $(X,d)$ be an ultrametric space, and let $x, y, z\in X$. Since $f$ is amenable, $f\circ d(x,y) = 0$ if and only if $x=y$. Since $d$ is an ultrametric, $d(x,z)\leq \max\{d(x,y), d(y,z)\}$. So $d(x,z)\leq d(x,y)$ or $d(x,z)\leq d(y,z)$. If $d(x,z)\leq d(x,y)$, then $f(d(x,z))\leq f(d(x,y)) \leq \max\{f\circ d(x,y), f\circ d(y,z)\}$. If $d(x,z)\leq d(y,z)$, then $f(d(x,z))\leq f(d(y,z)) \leq \max\{f\circ d(x,y), f\circ d(y,z)\}$. In any case $f\circ d(x,z) \leq \max\{f\circ d(x,y), f\circ d(y,z)\}$. Therefore $f\circ d$ is an ultrametric. This completes the proof.
\end{proof}
%%%%%%%%%%%%%%%%%%%%%%%%%%%%%%%%%%%%%%%%%%%%%%%%%%%5
\begin{cor}
Let $f:[0,\infty)\to[0,\infty)$. Then the following statements hold:
\begin{itemize}
\item[(i)] If $f$ is ultrametric-preserving and subadditive, then $f$ is metric-preserving.
\item[(ii)] If $f$ is metric-preserving and increasing on $[0,\infty)$, then $f$ is ultrametric-preserving.
\end{itemize}
\end{cor}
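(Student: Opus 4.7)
Both parts should fall out immediately from Theorem \ref{ultraiffamenableincreas} combined with the lemmas and corollaries already established; there is no genuine obstacle here, only a matter of citing the right facts.

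For part (i), the plan is to apply Theorem \ref{ultraiffamenableincreas} to extract from the hypothesis ``$f$ is ultrametric-preserving'' that $f$ is both amenable and increasing on $[0,\infty)$. Together with the additional hypothesis that $f$ is subadditive, the three conditions needed in Lemma \ref{amensubincreasgotmetric} are satisfied, and that lemma then yields $f\in\mathcal{M}$.

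For part (ii), I would argue dually. By Corollary \ref{corafteramenablelemma}, the hypothesis $f\in\mathcal{M}$ already gives that $f$ is amenable. The extra assumption supplies the ``increasing'' condition. These are exactly the two conditions in the ``if'' direction of Theorem \ref{ultraiffamenableincreas}, so $f$ is ultrametric-preserving.

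The only thing to be careful about is that both implications really are one-line invocations: the characterization theorem does all the work of moving between ``metric-preserving'' and ``ultrametric-preserving'' once one knows amenability together with either increasingness or subadditivity. No new construction of a space or of a function is required.
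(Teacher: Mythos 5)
Your proposal is correct and matches the paper's proof exactly: part (i) is obtained from Theorem \ref{ultraiffamenableincreas} together with Lemma \ref{amensubincreasgotmetric}, and part (ii) from Corollary \ref{corafteramenablelemma} together with the sufficiency direction of Theorem \ref{ultraiffamenableincreas}. Nothing further is needed.
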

\begin{proof}
We obtain that (i) follows from Theorem \ref{ultraiffamenableincreas} and Lemma \ref{amensubincreasgotmetric}, and (ii) follows from Corollary \ref{corafteramenablelemma} and Theorem \ref{ultraiffamenableincreas}.
\end{proof}
%%%%%%%%%%%%%%%%%%%%%%%%%%%%%%%%%%%%%%%%%%%%%%%%%
The next example shows that $\mathcal M\nsubseteq \mathcal U$ and $\mathcal U\nsubseteq \mathcal M$.
\begin{exam}\label{exam11sectionultrametricpreserving}
Let $f, g:[0,\infty)\to[0,\infty)$ be given by 
$$
f(x) = x^2\quad\text{and}\quad g(x) = \begin{cases}
0, &\text{if $x=0$};\\
1, &\text{if $x\in \mathbb Q-\{0\}$};\\
2, &\text{if $x\in \mathbb Q^c$}.
\end{cases} 
$$
\end{exam}
By Theorem \ref{ultraiffamenableincreas}, $f$ is ultrametric-preserving and $g$ is not ultrametric-preserving. If $d$ is the usual metric on $\mathbb R$, we see that 
$$
f\circ d(1,3) = f(2) = 4 > 2 = f\circ d(1,2) + f\circ d(2,3).
$$
So $f\circ d$ is not a metric and therefore $f$ is not metric-preserving. Since $g(x)\in [1,2]$ for all $x>0$, $g$ is tightly bounded and therefore, by Lemma \ref{amentightgotmetric}, $g$ is metric-preserving. In conclusion, $f\in \mathcal U$, $f\notin \mathcal M$, $g\in \mathcal M$, and $g\notin \mathcal U$. This shows that $\mathcal U\nsubseteq \mathcal M$ and $\mathcal M\nsubseteq \mathcal U$. This example also shows that the relations (S2) and (S3) in Proposition \ref{newtypeprop1} are proper subsets.\\
Next we give some results concerning concavity of the functions in $\mathcal U\cup \mathcal M$.
%%%%%%%%%%%%%%%%%%%%%%%%%%%%%%%%%%%%%%%%%%%%%%%
\begin{thm}\label{amenconcavegotultra}
Let $f:[0,\infty)\to[0,\infty)$. If $f$ is amenable and concave, then $f$ is ultrametric-preserving.
\end{thm}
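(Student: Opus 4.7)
The plan is to reduce the statement to Theorem \ref{ultraiffamenableincreas}, which characterizes $\mathcal U$ as the class of amenable, increasing functions. Since $f$ is already assumed amenable, the entire argument comes down to showing that $f$ is increasing on $[0,\infty)$.

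To deduce monotonicity from concavity, I will lean essentially on the non-negativity of $f$. Suppose for contradiction that $f(a) > f(b)$ for some $0 < a < b$ (the case $a = 0$ is trivial, since $f(0) = 0 \leq f(b)$). Concavity guarantees that for every $c > b$, the point $(c, f(c))$ lies on or below the secant line through $(a, f(a))$ and $(b, f(b))$, because writing $b = \frac{c-b}{c-a}a + \frac{b-a}{c-a}c$ and invoking concavity at $b$ rearranges to an explicit upper bound on $f(c)$ of the form $f(a) + \frac{c-a}{b-a}(f(b) - f(a))$. That secant has strictly negative slope $\frac{f(b)-f(a)}{b-a}$, so the upper bound tends to $-\infty$ as $c \to \infty$, contradicting $f \geq 0$. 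Hence $f$ is increasing, and Theorem \ref{ultraiffamenableincreas} immediately delivers $f \in \mathcal U$.

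The main (and really only) subtlety is recognizing that Lemma \ref{amencavegotdecreas} — which says $x \mapsto f(x)/x$ is decreasing — is not by itself enough: the resulting inequality $f(a) \geq (a/b)f(b)$ points the wrong way for monotonicity. It is the codomain constraint $f \geq 0$, in tandem with concavity, that truly forces $f$ to be nondecreasing. An alternative route would be to verify the ultrametric inequality for $f\circ d$ directly, using the fact that at least two of $d(x,y), d(y,z), d(x,z)$ are equal to the maximum in any ultrametric space, but this would essentially reprove Theorem \ref{ultraiffamenableincreas}, so the reduction above is cleaner.
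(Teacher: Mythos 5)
Your proof is correct, and its overall architecture matches the paper's: both arguments reduce the theorem to Theorem \ref{ultraiffamenableincreas} by showing that $f$ must be increasing. The difference lies entirely in how monotonicity is extracted from concavity. The paper argues by contradiction at a single point: assuming $f(y)<f(x)$ for some $y>x>0$, it chooses $t=f(y)/f(x)$ and $x_1=\frac{yf(x)-xf(y)}{f(x)-f(y)}\ge y$ so that $y=(1-t)x_1+tx_2$ with $x_2=x$, whence concavity forces $f(x_1)=0$, contradicting \emph{amenability} at $x_1>0$. You instead let a third point $c$ run to infinity and use the fact that a concave function lies below its (extended) secant through $(a,f(a))$ and $(b,f(b))$; since that secant has negative slope, the bound $f(c)\le f(a)+\frac{c-a}{b-a}\bigl(f(b)-f(a)\bigr)$ eventually contradicts \emph{non-negativity} of $f$. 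Your route is the more standard one and is slightly more general: it shows that every concave function $[0,\infty)\to[0,\infty)$ is automatically nondecreasing, with amenability playing no role in that step (it is only needed to invoke Theorem \ref{ultraiffamenableincreas} afterwards), whereas the paper's construction genuinely uses amenability to rule out $f(x_1)=0$. Your side remark that Lemma \ref{amencavegotdecreas} alone is insufficient is also accurate; the inequality $f(a)\ge\frac{a}{b}f(b)$ indeed points the wrong way.
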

\begin{proof}
Assume that $f$ is amenable and concave. We will show that $f$ is increasing. First observe that if $y>0$, then $f(y) > f(0)$ because $f$ is amenable. Next let $y>x>0$ and suppose for a contradiction that $f(y) < f(x)$. Let $t = \frac{f(y)}{f(x)}$, $x_1 = \frac{yf(x)-xf(y)}{f(x)-f(y)}$, and $x_2 = x$. Then $t\in (0,1)$, and $x_1, x_2\in (0,\infty)$. Since $f$ is concave, we obtain
$$
f(y) = f((1-t)x_1+tx_2) \geq (1-t)f(x_1)+tf(x_2) = (1-t)f(x_1)+f(y).
$$
This implies that $f(x_1) = 0$ which contradicts the fact that $x_1>0$ and $f$ is amenable. Hence $f$ is increasing on $[0,\infty)$. By Theorem \ref{ultraiffamenableincreas}, $f$ is ultrametric-preserving.
\end{proof}
%%%%%%%%%%%%%%%%%%%%%%%%%%%%%%%%%%%%%%%%%%%%%%%%%%
\begin{cor}\label{cor12metricpreserveultra}
If $f:[0,\infty)\to[0,\infty)$ is amenable and concave, then $f$ is both ultrametric-preserving and metric-preserving.
\end{cor}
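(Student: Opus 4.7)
The first half of the corollary is immediate: Theorem \ref{amenconcavegotultra} already gives that every amenable, concave $f$ is ultrametric-preserving, so no new work is required for that part. The entire task is therefore to show that an amenable concave $f$ lies in $\mathcal{M}$. The plan is to reduce this to Lemma \ref{amensubincreasgotmetric} by verifying that $f$ is amenable, increasing and subadditive.

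Amenability is assumed, and increasingness has actually just been proved inside Theorem \ref{amenconcavegotultra} (where it was shown that if $f$ is amenable and concave then $f(y)\ge f(x)$ whenever $y>x\ge 0$). So the only remaining ingredient is subadditivity. For this I would invoke Lemma \ref{amencavegotdecreas}, which tells us that $x\mapsto f(x)/x$ is decreasing on $(0,\infty)$. Given $a,b>0$, both $a$ and $b$ are $\le a+b$, so
\[
\frac{f(a+b)}{a+b}\le \frac{f(a)}{a}\quad\text{and}\quad \frac{f(a+b)}{a+b}\le \frac{f(b)}{b},
\]
which multiplied by $a$ and $b$ respectively and added give
\[
f(a+b)=\frac{a}{a+b}f(a+b)+\frac{b}{a+b}f(a+b)\le f(a)+f(b).
\]
The boundary cases where $a=0$ or $b=0$ are trivial since $f(0)=0$ by amenability. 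Hence $f$ is subadditive.

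With amenability, monotonicity and subadditivity in hand, Lemma \ref{amensubincreasgotmetric} yields $f\in\mathcal{M}$, and combined with Theorem \ref{amenconcavegotultra} this finishes the proof. I do not anticipate any real obstacle: all three properties are either given, already established, or follow from the one-line argument above based on Lemma \ref{amencavegotdecreas}. The only mildly delicate point is remembering to rule out $a=0$ or $b=0$ in the subadditivity step, since the decreasing-quotient argument needs positive denominators.
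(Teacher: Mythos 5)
Your proposal is correct and follows essentially the same route as the paper: both reduce the metric-preserving part to Lemma \ref{amensubincreasgotmetric} by establishing subadditivity from Lemma \ref{amencavegotdecreas} via $\frac{f(a+b)}{a+b}\leq\min\left\{\frac{f(a)}{a},\frac{f(b)}{b}\right\}$, multiplying by $a$ and $b$ and adding. The explicit handling of the boundary cases $a=0$ or $b=0$ is a minor addition the paper leaves implicit by taking $a,b\in(0,\infty)$.
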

\begin{proof}
The first part comes from Theorem \ref{amenconcavegotultra}. The other part have appeared in the literature but we will give an alternative proof here. We know that $f$ is increasing by Theorem \ref{amenconcavegotultra} and Theorem \ref{ultraiffamenableincreas}. So by Lemma \ref{amensubincreasgotmetric}, it suffices to show that $f$ is subadditive. Let $a, b\in (0,\infty)$. By Lemma \ref{amencavegotdecreas}, we have $\frac{f(a+b)}{a+b}\leq \min \left\{\frac{f(a)}{a}, \frac{f(b)}{b}\right\}$. Therefore
$$
f(a+b) = a\left(\frac{f(a+b)}{a+b}\right)+b\left(\frac{f(a+b)}{a+b}\right) \leq a\frac{f(a)}{a}+b\frac{f(b)}{b} = f(a)+f(b),
$$
as required. This completes the proof.
\end{proof}
%%%%%%%%%%%%%%%%%%%%%%%%%%%%%%%%%%%%%%%%%%%%%%%%%%%%%
The next example shows that there exists a function which is both metric-preserving and ultrametric-preserving but not concave.
\begin{exam}\label{exam14sectionultrametricpreserving}
Let $f:[0,\infty)\to[0,\infty)$ be defined by 
$$
f(x) = \begin{cases}
x, &x\in [0,1];\\
1, &x\in [1,10];\\
x-9, &x\in (10,11);\\
2, &x\geq 11.
\end{cases}
$$
\end{exam}
It is easy to see that $f$ is amenable and increasing. So, by Theorem \ref{ultraiffamenableincreas}, $f$ is ultrametric-preserving. Next we will show that $f$ is metric-preserving. By Lemma \ref{amensubincreasgotmetric}, it suffices to show that $f$ is subadditive. Observe that $f(x)\leq x$ and $f(x)\leq 2$ for every $x\in [0,\infty)$. We consider $a, b \in [0,\infty)$ in several cases.
\begin{itemize}
\item[] If $a, b\in [0,1]$, then $f(a)+f(b) = a+b\geq f(a+b)$.
\item[] If $a, b\in [1,10]$, then $f(a)+f(b)  =2 \geq f(a+b)$. \\
Similarly, if $a, b > 10$, then $f(a)+f(b) > 2 \geq f(a+b)$.
\item[] If $a\in [0,1]$, $b\in [1,10]$, then 
$$
f(a)+f(b)=a+1\geq\max\{1, a+b-9\} \geq f(a+b).
$$
\item[] If $a\in [0,1]$, $b\in[10,11]$, then $f(a)+f(b) = a+b-9 \geq f(a+b)$.
\item[] If $a\in [0,1]$, $b\in [11,\infty)$, then $f(a)+f(b)=a+2\geq 2 = f(a+b)$.
\item[] If $a\in [1,10]$, $b\in [10,\infty)$, then $f(a)+f(b)=b-8\geq 2 = f(a+b)$.
\end{itemize}
The other cases can be obtained similarly. Therefore $f$ is subadditive. Hence $f$ is metric-preserving. But $f\left(\frac{9+11}{2}\right) < \frac{f(9)+f(11)}{2}$, so $f$ is not concave. That is, $f\in \mathcal{U}\cap \mathcal M$ but $f$ is not concave. In addition, $f$ is not a constant on $(0,\infty)$. So this example also shows that $\mathcal{U\cap M}\nsubseteq \mathcal{MU}$ and the relation (S1) in Proposition \ref{newtypeprop1} is a proper subset.
%%%%%%%%%%%%%%%%%%%%%
%%%%%%%%%%%%%%%%%%%%%%%
%%%%%%%%%%%%%%%%%%%%
%%%%%%%%%%%%%%%%%%%%%%
\section{Metric-Ultrametric-Preserving Functions}\label{section4}
In this section, we characterize the functions in $\mathcal{MU}$. We will see that this notion is so strong that it forces the functions to be a constant on $(0,\infty)$. More precisely, we obtain the following theorem.
%%%%%%%%%%%%%%%%%%%%%%%%%%%%%%%%%5
\begin{thm}\label{thm14metricultraiffamen}
Let $f:[0,\infty)\to[0,\infty)$. Then $f$ is metric-ultrametric-preserving if and only if $f$ is amenable and $f$ is a constant on $(0,\infty)$.
\end{thm}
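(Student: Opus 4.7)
The plan is to prove both directions of the equivalence. The $(\Leftarrow)$ direction is immediate: if $f$ is amenable and takes some constant value $c$ on $(0,\infty)$, then for every metric space $(X,d)$ the composition $f\circ d$ vanishes on the diagonal and equals $c$ off the diagonal, which is a scalar multiple of the discrete metric and hence an ultrametric.

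For the nontrivial direction, assume $f\in\mathcal{MU}$. Amenability is handed to us by Corollary \ref{corafteramenablelemma}, so the task reduces to showing that $f$ is constant on $(0,\infty)$. The key step is to extract the following numerical inequality: for every $p>0$ and every $r\in(0,2p]$, $f(r)\leq f(p)$. I would obtain this by testing the hypothesis on an isoceles triangle in the Euclidean plane. Given such $p$ and $r$, pick $A=(-r/2,0)$, $B=(r/2,0)$, and $C=(0,\sqrt{p^2-r^2/4})$ in $\mathbb{R}^2$, and let $d$ be the restriction of the Euclidean metric to $X=\{A,B,C\}$. Then $d(A,B)=r$ and $d(A,C)=d(B,C)=p$, so $(X,d)$ is a bona fide metric space; since $f\in\mathcal{MU}$, $f\circ d$ is an ultrametric on $X$, and applying the ultrametric inequality at $C$ yields $f(r)=(f\circ d)(A,B)\leq\max\{(f\circ d)(A,C),(f\circ d)(B,C)\}=f(p)$.

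Given this inequality, the constancy of $f$ on $(0,\infty)$ follows in two short moves. First, for any $0<p\leq q\leq 2p$, using the inequality with the pair $(p,q)$ gives $f(q)\leq f(p)$, while using it with the pair $(q,p)$ (legal since $p\leq q\leq 2q$) gives $f(p)\leq f(q)$; hence $f(p)=f(q)$. Second, for arbitrary $0<p<q$, pick the integer $n\geq 0$ with $2^{n}p\leq q\leq 2^{n+1}p$ and chain the equalities $f(p)=f(2p)=f(4p)=\cdots=f(2^{n}p)=f(q)$, the last one coming from the first move applied to the pair $(2^{n}p,q)$.

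The main obstacle, as usual for theorems of this flavor, is locating the right test configuration: we need a metric space that delivers the precise comparison $f(r)\leq f(p)$ for the full range $r\in(0,2p]$, since this range is exactly what makes the subsequent doubling argument work. The isoceles triangle in $\mathbb{R}^{2}$ is tailor-made for this, and once it is in place the rest of the argument is essentially bookkeeping with symmetry and dyadic iteration.
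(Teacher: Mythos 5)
Your proof is correct, and the forward direction takes a genuinely leaner route than the paper's. Both arguments rest on the same test configuration --- the isoceles triangle $A=(-r/2,0)$, $B=(r/2,0)$, $C=(0,\sqrt{p^2-r^2/4})$ in $\mathbb{R}^2$ --- but the paper deploys it alongside Lemma \ref{newtypelemma1}: it first subdivides $[0,1]$ in $(\mathbb{R},|\cdot|)$ into $n$ equal pieces to get $f(1)\leq f(1/n)$, pairs that with the triangle to get $f(1/n)=f(1)$, repeats for $f(m/n)$, and finally handles irrational arguments by sandwiching between two rationals with two triangles. You instead isolate the single inequality $f(r)\leq f(p)$ for $0<r\leq 2p$ (which is exactly what the triangle gives, including the degenerate collinear case $r=2p$), note that applying it in both directions forces $f(p)=f(q)$ whenever $p\leq q\leq 2p$, and then reach an arbitrary quotient $q/p$ by dyadic doubling. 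This dispenses with Lemma \ref{newtypelemma1}, with the auxiliary metric on $\mathbb{R}$, and with the rational/irrational case split, at no cost in rigor; the paper's version, on the other hand, makes the role of the chain inequality of Lemma \ref{newtypelemma1} visible, which is reused elsewhere. One small point worth stating explicitly in a final write-up: the three points of your triangle are pairwise distinct for every $r\in(0,2p]$ (even when $C$ is the origin), so the restriction of the Euclidean metric really is a three-point metric space to which the hypothesis $f\in\mathcal{MU}$ applies.
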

\begin{proof}
First assume that $f$ is amenable and is a  constant on $(0,\infty)$. That is there exists a constant $c>0$ such that $f(x) = \begin{cases}
0, &\text{if $x=0$};\\
c, &\text{if $x>0$}.
\end{cases}$
To show that $f$ is metric-ultrametric-preserving, let $(X,d)$ be a metric space and let $x, y, z\in X$. If $x=y$ or $x=z$ or $y=z$, then it is easy to see that $f\circ d(x,y)\leq \max\{f\circ d(x,z), f\circ d(z,y)\}$. If $x, y, z$ are all distinct, then $f\circ d(x,y) = c = f\circ d(x,z) = f\circ d(y,z)$ and therefore 
$$
f\circ d(x,y) \leq \max\{f\circ d(x,z), f\circ d(z,y)\}.
$$
This shows that $f\circ d$ is an ultrametric. In the other direction, we assume that $f\in \mathcal{MU}$. By Corollary \ref{corafteramenablelemma}, it is enough to show that $f$ is a constant on $(0,\infty)$. Throughout the proof, we let $d$ be the usual metric on $\mathbb R$ and $d_2$ the Euclidean metric on $\mathbb R^2$. We will apply Lemma \ref{newtypelemma1} repeatedly. First we will show that 
$$
f(1) = f\left(\frac{1}{n}\right) = f\left(\frac{m}{n}\right)\quad\text{for every $m, n\in\mathbb N$}.
$$
So we let $m, n\in\mathbb N$ be arbitrary. Since $f\in \mathcal {MU}$, $f\circ d$ is an ultrametric on $\mathbb R$. By Lemma \ref{newtypelemma1}, we have
\begin{align*}
f(1) &= f\circ d(0,1)\\
&\leq \max\left\{f\circ d\left(0,\frac{1}{n}\right), f\circ d\left(\frac{1}{n},\frac{2}{n}\right),\ldots, f\circ d\left(\frac{n-1}{n},1\right)\right\}\\
&= \max\left\{f\left(\frac{1}{n}\right),f\left(\frac{1}{n}\right),\ldots, f\left(\frac{1}{n}\right)\right\} = f\left(\frac{1}{n}\right).
\end{align*} 
Next let $A = \left(-\frac{1}{2n},0\right)$, $B = \left(\frac{1}{2n},0\right)$, $C = \left(0,\sqrt{\frac{4-\left(\frac{1}{n}\right)^2}{4}}\right)$ be points in $\mathbb R^2$. Since $f\in \mathcal{MU}$, $f\circ d_2$ is an ultrametric on $\mathbb R^2$. Therefore 
\begin{align*}
f\left(\frac{1}{n}\right) &= f\circ d_2 (A,B)\leq\max\left\{f\circ d_2(A,C), f\circ d_2(C,B)\right\}\\
&= \max\left\{f(1), f(1)\right\} = f(1).
\end{align*} 
Therefore $f(1) = f\left(\frac{1}{n}\right)$. By a similar method, we obtain
\begin{align*}
f\left(\frac{m}{n}\right) &= f\circ d\left(0,\frac{m}{n}\right) \\
&\leq \max\left\{\left.f\circ d\left(\frac{k-1}{n},\frac{k}{n}\right)\right| k\in \{1, 2, \ldots, m\}\right\} = f\left(\frac{1}{n}\right).
\end{align*}
In addition, we let $A = \left(-\frac{1}{2n},0\right)$, $B = \left(\frac{1}{2n},0\right)$, $C = \left(0,\sqrt{\frac{4\left(\frac{m}{n}\right)^2-\left(\frac{1}{n}\right)^2}{4}}\right)$ be points in $\mathbb R^2$ so that
\begin{align*}
f\left(\frac{1}{n}\right) = f\circ d_2 (A,B) \leq \max\left\{f\circ d_2(A,C), f\circ d_2(C,B)\right\} = f\left(\frac{m}{n}\right).
\end{align*}
Therefore $f\left(\frac{m}{n}\right) = f\left(\frac{1}{n}\right)$. Hence $f\left(\frac{m}{n}\right) = f\left(\frac{1}{n}\right) = f(1)$ for every $m, n\in \mathbb N$, as asserted. We conclude that 
\begin{equation}\label{metricultrastep2eq1}
f(q) = f(1)\quad\text{for every $x\in \mathbb Q\cap (0,\infty)$}.
\end{equation}
Next let $a \in \mathbb Q^c\cap (0,\infty)$. We will show that $f(a) = f(1)$. Let $q_1, q_2\in \mathbb Q\cap (0,\infty)$ be such that $q_1 < a < q_2$. Let $A_1 = \left(-\frac{q_1}{2},0\right)$, $B_1 = \left(\frac{q_1}{2},0\right)$, $C_1 = \left(0,\sqrt{\frac{4a^2-q_1^2}{4}}\right)$, $A_2 = \left(-\frac{a}{2},0\right)$, $B_2 = \left(\frac{a}{2},0\right)$, $C_2 = \left(0,\sqrt{\frac{4q_2^2-a^2}{4}}\right)$ be points in $\mathbb R^2$. By (\ref{metricultrastep2eq1}) and the fact that $f\circ d_2$ is an ultrametric on $\mathbb R^2$, we obtain 
\begin{align*}
f(1) &= f(q_1) = f\circ d_2(A_1, B_1)\\
&\leq \max\left\{f\circ d_2(A_1,C_1), f\circ d_2(C_1,B_1)\right\}\\
&= f(a) = f\circ d_2(A_2,B_2)\\
&\leq \max\left\{f\circ d_2(A_2,C_2), f\circ d_2(C_2,B_2)\right\}\\
&= f(q_2) = f(1).
\end{align*}
This shows that 
\begin{equation}\label{metricultrastep2eq2}
f(a) = f(1)\quad\text{for all $a\in \mathbb Q^c\cap (0,\infty)$}.
\end{equation}
From (\ref{metricultrastep2eq1}) and (\ref{metricultrastep2eq2}), we see that $f(x) = f(1)$ for all $x\in (0,\infty)$. This completes the proof. 
\end{proof}
%%%%%%%%%%%%%%%%%%%%%%%%%%%%%%%%%%%%%%%%%%%%%%%%5
\indent Let $f$ be a metric-preserving function and let $d$ be a metric. Then either $f\circ d$ is a metric equivalent to $d$ or $f\circ d$ is a uniformly discrete metric \cite{BD1, Do2}. In addition, $f$ is continuous on $[0,\infty)$ if and only if it is continuous at 0 \cite{BD1, C, Do2}. But by Theorem \ref{thm14metricultraiffamen}, every metric-ultrametric-preserving function $f$ is always discontinuous at 0 and $f\circ d$ is always a uniformly discrete metric for all metric $d$. We record this in the next corollary.
%%%%%%%%%%%%%%%%%%%%%%%%%%%%%%%%%%%%%%%%%%%%%%%%
\begin{cor}\label{cor16sectionmetricultrapreserving}
Let $f:[0,\infty)\to[0,\infty)$ be metric-ultrametric-preserving. Then
\begin{itemize}
\item[(i)] $f\circ d$ is a uniformly discrete metric for every metric $d$,
\item[(ii)] $f$ is discontinuous at $0$ and is continuous on $(0,\infty)$.
\end{itemize}
\end{cor}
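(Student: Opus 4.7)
The plan is to apply Theorem \ref{thm14metricultraiffamen} directly: it tells us that a metric-ultrametric-preserving $f$ has the very rigid form $f(0)=0$ and $f(x)=c$ for some fixed constant $c>0$ and all $x>0$. Once we have this explicit form, both statements of the corollary reduce to trivial observations, so there is no real obstacle; the corollary is essentially a packaging of Theorem \ref{thm14metricultraiffamen}.

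For part (i), I would fix an arbitrary metric space $(X,d)$ and compute $f\circ d$ pointwise using the form above. For $x,y\in X$, either $x=y$, in which case $d(x,y)=0$ and so $(f\circ d)(x,y)=f(0)=0$, or $x\neq y$, in which case $d(x,y)>0$ and so $(f\circ d)(x,y)=c$. Thus $f\circ d$ takes only the two values $0$ and $c$. Then the open ball $B_{f\circ d}(x,c)=\{y\in X:(f\circ d)(x,y)<c\}$ equals $\{x\}$ for every $x\in X$, so $f\circ d$ is uniformly discrete with witnessing radius $\varepsilon=c$. (That $f\circ d$ is a metric follows already from Proposition \ref{newtypeprop1} via $\mathcal{MU}\subseteq\mathcal M$, but it is also immediate by direct check.)

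For part (ii), continuity of $f$ on $(0,\infty)$ is obvious since $f$ is constantly equal to $c$ there. For discontinuity at $0$, I would note that $f(0)=0$ while for any sequence $x_n\to 0^+$ with $x_n>0$ we have $f(x_n)=c\not\to 0$; hence $f$ fails to be continuous at $0$. This completes both parts.
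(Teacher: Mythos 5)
Your proposal is correct and follows essentially the same route as the paper: both invoke Theorem \ref{thm14metricultraiffamen} to get the explicit two-valued form of $f$, read off part (ii) immediately, and verify part (i) by noting that $f\circ d$ takes only the values $0$ and $c$ so that a small open ball isolates each point (the paper uses radius $c/2$ where you use $c$; both work for the open ball).
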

\begin{proof}
By Theorem \ref{thm14metricultraiffamen}, there exists $c>0$ such that $f(x) = \begin{cases}
0,\quad\text{if $x=0$};\\
c,\quad\text{if $x>0$}.
\end{cases}$
So (ii) follows immediately. If $(X,d)$ is a metric space, then 
$$
f\circ d(x,y) = 
\begin{cases}
0,\quad\text{if $x=y$};\\
c,\quad\text{if $x\neq y$}.
\end{cases}
$$
So if we let $\varepsilon = \frac{c}{2}$, then $B_{f\circ d}(x,\varepsilon) = \{x\}$ for every $x\in X$. This proves (i).
\end{proof}
%%%%%%%%%%%%%%%%%%%%%%%%%%%%%%%%%%%%%%%5
%%%%%%%%%%%%%%%%%%%%%%%%%%%%%%%%%%%%%%%%
%%%%%%%%%%%%%%%%%%%%%%%%%%%%%%%%%%%%%%%%%
%%%%%%%%%%%%%%%%%%%%%%%%%%%%%%%%%%%%%%%%%%%
\section{Ultrametric-Metric-Preserving Functions}\label{section5}
In this section, we give a characterization of the functions in $\mathcal{UM}$ in terms of special type of triangle triplets. Recall that a triple $(a,b,c)$ of nonnegative real numbers is called \textit{triangle triplet} if $a\leq b+c$, $b\leq c+a$, and $c\leq a+b$. We denote by $\Delta$ the set of all triangle triplets. We introduce a special type of triangle triplets that will be used to characterize ultrametric-metric-preserving functions in the next definition.
\begin{defn}
A triple $(a,b,c)$ of nonnegative real numbers will be called \textit{ultra triangle triplet} if $a\leq \max\{b,c\}$, $b\leq \max\{c,a\}$, and $c\leq \max\{a,b\}$. We denote by $\Delta_\infty$ the set of all ultra triangle triplets.
\end{defn}
Since we will compare the functions $f$ in $\mathcal {UM}$ with those in $\mathcal M$, we first state a characterization of metric-preserving functions in terms of triangle triplets.
\begin{thm}\label{thm15ultrametricpreserv}
Let $f:[0,\infty)\to[0,\infty)$ be amenable. Then the following statements are equivalent:
\begin{itemize}
\item[(i)] $f$ is metric-preserving,
\item[(ii)] for each $(a,b,c)\in \Delta$, $(f(a),f(b),f(c))\in \Delta$,
\item[(iii)] for each $(a,b,c)\in \Delta$, $f(a) \leq f(b)+f(c)$.
\end{itemize}
\end{thm}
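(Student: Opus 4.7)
The plan is to establish the chain (i) $\Rightarrow$ (ii) $\Rightarrow$ (iii) $\Rightarrow$ (i). The equivalence between (ii) and (iii) is essentially free: the definition of $\Delta$ is symmetric in its three entries, so whenever $(a,b,c)\in\Delta$ every permutation lies in $\Delta$ as well. Thus applying (iii) to the three cyclic rearrangements of $(a,b,c)$ yields all three inequalities defining $(f(a),f(b),f(c))\in\Delta$, giving (iii) $\Rightarrow$ (ii); the reverse is immediate from the definition.

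For (i) $\Rightarrow$ (ii), I would realize an arbitrary triangle triplet $(a,b,c)$ as the side lengths of a three-point metric space. Given $(a,b,c)\in\Delta$ with $a,b,c>0$, take $X=\{p_1,p_2,p_3\}$ and define $d(p_1,p_2)=c$, $d(p_1,p_3)=b$, $d(p_2,p_3)=a$ (and $d(p_i,p_i)=0$). Positivity and symmetry are clear, and the three triangle inequalities for $d$ are exactly the defining conditions of $(a,b,c)\in\Delta$, so $d$ is a metric. Since $f\in\mathcal M$, $f\circ d$ is again a metric, and reading off its triangle inequalities for the three ordered triples of points gives $(f(a),f(b),f(c))\in\Delta$. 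The degenerate cases need a brief side-remark: if one entry of the triangle triplet vanishes, say $a=0$, the triplet inequalities force $b=c$, and amenability gives $f(0)=0$, so $(0,f(b),f(b))\in\Delta$ trivially; if two entries vanish, all three do, and again the conclusion is immediate.

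For (iii) $\Rightarrow$ (i), let $(X,d)$ be any metric space. Property (M1) for $f\circ d$ follows from amenability of $f$ combined with (M1) for $d$; property (M2) is inherited trivially. For (M3), fix $x,y,z\in X$; by the triangle inequality for $d$, the triple $(d(x,y),d(x,z),d(z,y))$ lies in $\Delta$, and (iii) then gives $f(d(x,y))\le f(d(x,z))+f(d(z,y))$, which is precisely the triangle inequality for $f\circ d$.

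The only part requiring genuine content is the realization step in (i) $\Rightarrow$ (ii), and even there the only mild obstacle is bookkeeping the degenerate triplets where some coordinate vanishes; amenability absorbs those cases cleanly, so no real difficulty arises.
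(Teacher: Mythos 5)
Your proof is correct. The paper does not actually prove this theorem itself --- it only cites the literature (Bors\'ik--Dobo\v s, Corazza, Dobo\v s) --- and your argument (realizing a positive triangle triplet as a three-point metric space for (i)$\Rightarrow$(ii), handling the degenerate triplets via amenability, exploiting the symmetry of $\Delta$ for (ii)$\Leftrightarrow$(iii), and reading the triangle inequality for $f\circ d$ off of (iii)) is exactly the standard proof found in those references, with all the minor cases accounted for.
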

\begin{proof}
The proof can be found, for example, in \cite{BD1, C, Do2}.
\end{proof}
%%%%%%%%%%%%%%%%%%%%%%%%%%%%%%%%5
%%%%%%%%%%%%%%%%%%%%%%%%%%%%%%%%%%
Similar to Theorem \ref{thm15ultrametricpreserv}, we obtain a characterization of the functions in $\mathcal{UM}$ in terms of ultra triangle triplets as follows.
\begin{thm}\label{ultrametricthm}
Let $f:[0,\infty)\to[0,\infty)$ be amenable. Then the following statements are equivalent:
\begin{itemize}
\item[(i)] $f$ is ultrametric-metric-preserving,
\item[(ii)] for each $(a,b,c)\in \Delta_\infty$, $(f(a),f(b),f(c))\in \Delta$,
\item[(iii)] for each $0\leq a\leq b$, $f(a) \leq 2f(b)$.
\end{itemize}
\end{thm}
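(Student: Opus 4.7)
The plan is to establish the cycle (i) $\Rightarrow$ (ii) $\Rightarrow$ (iii) $\Rightarrow$ (i), reusing the three-point $\mathbb R^2$ constructions already developed in Proposition \ref{amenablelemma} and Theorem \ref{ultraiffamenableincreas}.

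For (i) $\Rightarrow$ (ii), given $(a,b,c)\in\Delta_\infty$, I would first observe that the defining inequalities of $\Delta_\infty$ force the two largest of $a, b, c$ to coincide (if $a\le b\le c$, then $c\le\max\{a,b\}=b$, so $b=c$). If one of $a, b, c$ is $0$, then the remaining two are equal and $(f(a),f(b),f(c))\in\Delta$ is automatic. Otherwise I would build a three-point subset $\{A,B,C\}\subset\mathbb R^2$ forming an isoceles triangle whose base has length equal to the smallest of $a, b, c$ and whose two equal legs have length equal to the common largest value; this realises the pairwise distances $(a,b,c)$. Because the two largest pairwise distances coincide, the restricted Euclidean metric is an ultrametric, so hypothesis (i) forces $f\circ d$ to be a metric, and its triangle inequality yields $(f(a),f(b),f(c))\in\Delta$.

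For (ii) $\Rightarrow$ (iii), given $0\le a\le b$, the triple $(a,b,b)$ visibly lies in $\Delta_\infty$, so (ii) supplies $(f(a),f(b),f(b))\in\Delta$, whose first component is precisely $f(a)\le 2f(b)$.

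For (iii) $\Rightarrow$ (i), let $(X,d)$ be an ultrametric space. Amenability of $f$ gives (M1), and symmetry of $d$ gives (M2), so only the triangle inequality needs attention. Fix $x,y,z\in X$ and write $\alpha=d(x,y)$, $\beta=d(x,z)$, $\gamma=d(z,y)$. I would invoke the ultrametric ``isoceles principle'': applying (U3) to each of the three cyclic permutations, no single value among $\{\alpha,\beta,\gamma\}$ can strictly exceed the other two, hence the maximum is attained at least twice. If one of $\beta,\gamma$ equals $\alpha$, then $f(\alpha)\le f(\beta)+f(\gamma)$ is immediate from $f\ge 0$; otherwise $\beta=\gamma\ge\alpha$, and (iii) yields $f(\alpha)\le 2f(\beta)=f(\beta)+f(\gamma)$.

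The main obstacle is conceptual rather than computational: one must recognise that the seemingly weak factor-$2$ bound in (iii) is precisely matched to the isoceles structure of ultrametric triangles. Without this observation, (iii) $\Rightarrow$ (i) looks too strong, since the analogous characterization for $\mathcal M$ in Theorem \ref{thm15ultrametricpreserv} is considerably more restrictive; the gap is exactly what ultrametricity buys.
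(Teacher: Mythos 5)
Your proposal is correct and follows essentially the same route as the paper: realizing an ultra triangle triplet as an isoceles Euclidean triangle for (i)$\Rightarrow$(ii), the triple $(a,b,b)$ for (ii)$\Rightarrow$(iii), and the isoceles structure of ultrametric triangles (the paper's Lemma \ref{ultrametricthmlemma2}) for (iii)$\Rightarrow$(i). The only cosmetic difference is that the paper isolates the geometric realization and the isoceles principle as separate lemmas, whereas you derive them inline (and you handle the degenerate case with a zero entry explicitly, which the paper glosses over).
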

%%%%%%%%%%%%%%%%%%%%%%%%%%%%%%%%%%%
To prove Theorem \ref{ultrametricthm}, the following lemmas are useful.
\begin{lem}\label{ultrametricthmlemma1}
If $(X,d)$ is an ultrametric space and $x, y, z\in X$, then the triple $(d(x,y),d(x,z),d(z,y))$ is an ultra triangle triplet. Conversely, if $(a,b,c)$ is an ultra triangle triplet, then there exist an ultrametric space $(X,d)$ and $x, y, z\in X$ such that $(a,b,c) = (d(x,y),d(x,z),d(z,y))$.
\end{lem}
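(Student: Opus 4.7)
My plan is to prove the two implications separately, and the first is essentially a direct application of the definitions.

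For the forward direction, given an ultrametric space $(X,d)$ and points $x, y, z \in X$, I would simply apply the ultrametric inequality (U3) three times, each time with a different point playing the role of the ``midpoint'':
$$
d(x,y) \leq \max\{d(x,z), d(z,y)\}, \quad d(x,z) \leq \max\{d(x,y), d(y,z)\}, \quad d(z,y) \leq \max\{d(z,x), d(x,y)\}.
$$
Using symmetry (M2) to rewrite $d(y,z) = d(z,y)$ and $d(z,x) = d(x,z)$, these three inequalities are precisely the defining conditions for $(d(x,y), d(x,z), d(z,y))$ to lie in $\Delta_\infty$.

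For the converse, given $(a,b,c) \in \Delta_\infty$, I would build an explicit three-point space. Take $X = \{x, y, z\}$ (formal symbols) and define $d$ symmetrically by $d(x,y) = a$, $d(x,z) = b$, $d(z,y) = c$, together with $d(u,u) = 0$ for each $u \in X$. Property (M2) holds by construction. For the ultrametric inequality (U3), the only case not reducing to a trivial equality is when the three arguments are pairwise distinct; there the three required instances of (U3) are exactly $a \leq \max\{b,c\}$, $b \leq \max\{a,c\}$, $c \leq \max\{a,b\}$, which is the hypothesis $(a,b,c) \in \Delta_\infty$. Then by construction $(d(x,y), d(x,z), d(z,y)) = (a,b,c)$.

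The one subtlety I anticipate is verifying property (M1) when some of $a, b, c$ equal zero, and this is where the definition of $\Delta_\infty$ does real work for us. If, say, $a = 0$, then the conditions $b \leq \max\{c, a\} = c$ and $c \leq \max\{a, b\} = b$ force $b = c$; in that case I would simply identify $y$ with $x$ and take $X = \{x, z\}$ with $d(x,z) = b = c$, producing a genuine two-point ultrametric space that still realizes the triple $(0, b, b) = (a, b, c)$. The other degenerate cases (one, two, or all three of $a,b,c$ equal to zero) are handled by analogous identifications; in each case the ultra triangle triplet inequalities force exactly the coincidences needed to make $d$ satisfy (M1). After this bookkeeping the construction yields a bona fide ultrametric space in every case, completing the proof.
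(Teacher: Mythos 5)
Your proof is correct, and the forward direction matches the paper's (which simply observes it follows from the ultrametric inequality). For the converse, however, you take a genuinely different route. The paper first invokes its Lemma \ref{ultrametricthmlemma2} to reduce to the case $a\leq b=c$ (up to permutation) and then realizes the triple concretely as an isoceles triangle $A=\left(-\frac{a}{2},0\right)$, $B=\left(\frac{a}{2},0\right)$, $C=\left(0,\sqrt{\frac{4b^2-a^2}{4}}\right)$ in $\mathbb R^2$ with the restricted Euclidean metric --- the same three-point configuration used repeatedly elsewhere in the paper --- and asserts that this space is ultrametric. You instead build an abstract three-point space and verify the axioms (M1), (M2), (U3) directly, observing that the only nontrivial instances of (U3) are exactly the three defining inequalities of $\Delta_\infty$. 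Your approach is more self-contained: it needs neither Lemma \ref{ultrametricthmlemma2} nor the (unstated in the paper) check that the isoceles Euclidean triangle with $a\leq b=c$ satisfies the ultrametric inequality, and it generalizes verbatim beyond triples realizable in $\mathbb R^2$. The price is the explicit bookkeeping for degenerate triples with zero entries, which you handle correctly by showing the $\Delta_\infty$ conditions force exactly the coincidences needed to identify points; in the paper's Euclidean construction this case is absorbed automatically, since $a=0$ simply makes $A$ and $B$ the same point. Both arguments are complete and valid.
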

%%%%%%%%%%%%%%%%%%%%%%%%%%%%%%%%%%%
\begin{lem}\label{ultrametricthmlemma2}
If $(a,b,c)\in\Delta_\infty$, then 
$$
\text{(i) $a\leq b = c$ or (ii) $b\leq c = a$ or (iii) $c\leq a = b$}.
$$
\end{lem}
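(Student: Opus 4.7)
The plan is to perform a simple case analysis based on which of $a,b,c$ is the largest. Set $M=\max\{a,b,c\}$. The key observation is that in the definition of $\Delta_\infty$, one of the three inequalities has a quantity equal to $M$ on its left-hand side, and this will force one of the remaining two entries to also equal $M$, from which the trichotomy in the statement follows immediately.

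Concretely, suppose first that $M=c$. Then the ultra triangle triplet inequality $c\leq \max\{a,b\}$ yields $\max\{a,b\}\geq M$. Since both $a\leq M$ and $b\leq M$ by the choice of $M$, we must have $a=M=c$ or $b=M=c$. In the first subcase $b\leq c=a$, which is conclusion (ii); in the second subcase $a\leq b=c$, which is conclusion (i). The remaining situations $M=a$ and $M=b$ are handled by precisely the same argument with the roles of $a,b,c$ permuted, using the inequalities $a\leq \max\{b,c\}$ and $b\leq \max\{c,a\}$ respectively. This exhausts all cases.

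I do not anticipate any real obstacle here: the content of the lemma is simply that in any ultra triangle triplet the two largest entries must coincide, and the argument is a one-line application of the defining inequality for whichever coordinate attains the maximum. The only thing to be slightly careful about is to make sure the case analysis is exhaustive (which it is, since some entry must attain $M$) and that the three conclusions of the lemma are indexed so that they correctly match the three possible locations of the maximum.
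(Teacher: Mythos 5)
Your proof is correct and is essentially the same elementary case analysis as the paper's: the paper first rules out $a,b,c$ being all distinct (since the strict maximum would violate its defining inequality) and then matches the equal pair to conclusions (i)--(iii), which is just a rephrasing of your observation that the coordinate attaining $\max\{a,b,c\}$ forces another coordinate to equal it. No gaps; the case analysis is exhaustive and the conclusions are matched correctly.
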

%%%%%%%%%%%%%%%%%%%%%%%%%%%%%%%%%%%5
We will prove Lemma \ref{ultrametricthmlemma2}, Lemma \ref{ultrametricthmlemma1}, and then Theorem \ref{ultrametricthm}, respectively.\vspace{0.3cm}\\
\noindent\textbf{Proof of Lemma \ref{ultrametricthmlemma2}.}\vspace{0.25cm}\\
Let $(a,b,c)\in \Delta_\infty$. Suppose that $a, b, c$ are all distinct. Without loss of generality, we can assume that $a<b<c$. Then $c > \max\{a, b\}$ which contradicts the fact that $(a,b,c)\in \Delta_\infty$. So $a, b, c$ are not all distinct. If $a = b$, then $c\leq \max\{a, b\} = a$ and (iii) holds. Similarly, if $a=c$, then (ii) holds and if $b = c$, then (i) holds.\qed\vspace{0.3cm}\\
%%%%%%%%%%%%%%%%%%%%%%%%%%%%%%%%%%%%%%
\noindent\textbf{Proof of Lemma \ref{ultrametricthmlemma1}.} \vspace{0.25cm}\\
The first part follows immediately from the ultrametric inequality of $d$. For the converse, we let $(a,b,c)\in \Delta_\infty$. By Lemma \ref{ultrametricthmlemma2}, we can assume that $a\leq b = c$ (the other cases can be proved similarly). Let $X = \{A, B, C\}\subseteq \mathbb R^2$ where $A = \left(-\frac{a}{2},0\right)$, $B = \left(\frac{a}{2},0\right)$, and $C = \left(0,\sqrt{\frac{4b^2-a^2}{4}}\right)$. Let $d_2$ be the Euclidean metric on $\mathbb R^2$ and $d = d_2\mid_X$. Then $(X,d)$ is an ultrametric space and $(a,b,c) = (d_2(A,B), d_2(A,C), d_2(C,B))$.\qed\vspace{0.3cm}\\
%%%%%%%%%%%%%%%%%%%%%%%%%%%%%%%%%%%%%%
\noindent\textbf{Proof of Theorem \ref{ultrametricthm}.}  \vspace{0.25cm}\\
\indent (i)$\to$(ii) Let $f\in \mathcal{UM}$ and let $(a,b,c)\in \Delta_\infty$. Then by Lemma \ref{ultrametricthmlemma1}, there exist an ultrametric space $(X,d)$ and $x, y, z\in X$ such that 
$$
(a,b,c) = (d(x,y),d(x,z),d(z,y)).
$$
 Since $f\in \mathcal{UM}$, $(X,f\circ d)$ is a metric space. It follows from the triangle inequality of $f\circ d$ that $(f\circ d(x,y),f\circ d(x,z),f\circ d(z,y))$ is a triangle triplet. That is $(f(a),f(b),f(c))\in \Delta$.\vspace{0.3cm}\\
\indent (ii)$\to$(iii) Assume that (ii) holds. Let $0\leq a\leq b$. Then $(a,b,b)\in \Delta_\infty$. So $(f(a),f(b),f(b))\in \Delta$ by (ii). Therefore $f(a)\leq f(b)+f(b) = 2f(b)$, as required.\vspace{0.3cm}\\
\indent (iii)$\to$(i) Assume that (iii) holds. Let $(X,d)$ be an ultrametric space. Since $f$ is amenable, $f\circ d(x,y) = 0$ if and only if $x=y$. So it remains to show that the triangle inequality holds for $f\circ d$. Let $x, y, z\in X$. Then by Lemma \ref{ultrametricthmlemma1}, $(d(x,y),d(x,z),d(z,y))\in \Delta_\infty$. Then by Lemma \ref{ultrametricthmlemma2}, we can assume that $d(x,y)\leq d(x,z) = d(z,y)$ (the other cases can be proved similarly). Then by (iii), we obtain 
\begin{align*}
f\circ d(x,y) = f(d(x,y)) &\leq 2f(d(x,z))\\
&= f(d(x,z))+f(d(z,y))\\
&= f\circ d(x,z)+f\circ d(z,y),\quad\text{as required}.
\end{align*}
Hence the proof is complete. \qed\vspace{0.3cm}\\
%%%%%%%%%%%%%%%%%%%%%
%%%%%%%%%%%%%%%%%%%%%%%%%%%%%5
\indent Next we give an example to show that the relation (S4) in Proposition \ref{newtypeprop1} is a proper subset.
\begin{exam}\label{example24fdisnotmetric}
Let $f:[0,\infty)\to[0,\infty)$ be given by 
$$
f(x) = \begin{cases}
x, &\text{if $x\leq 1$};\\
\frac{1}{2}, &\text{if $x>1$}.
\end{cases}
$$
\end{exam}
Let $d$ be the usual metric on $\mathbb R$. Then 
$$
f\circ d(1,2) = f(1) = 1 > \frac{1}{3}+\frac{1}{2} = f\circ d\left(1,\frac{2}{3}\right)+f\circ d\left(\frac{2}{3},2\right). 
$$
So $f\circ d$ is not a metric and therefore $f\notin \mathcal M$. Since $f$ is not increasing, $f\notin \mathcal U$. Next we will show that $f\in \mathcal{UM}$, by applying Theorem \ref{ultrametricthm}. Let $0\leq a\leq b$. If $b\geq \frac{1}{2}$, then $f(b) \geq \frac{1}{2}$ and therefore $2f(b) \geq 1\geq f(x)$ for all $x\in [0,\infty)$. In particular, $2f(b) \geq f(a)$. If $b < \frac{1}{2}$, then $a < \frac{1}{2}$ and thus $f(a) = a \leq b = f(b) \leq 2f(b)$. In any case, we have $f(a)\leq 2f(b)$. Hence $f\in \mathcal {UM}$ but $f\notin \mathcal M$ and $f\notin \mathcal U$. This example shows that $\mathcal{UM}\nsubseteq \mathcal{U}\cup\mathcal{M}$ and the relation (S4) in Proposition \ref{newtypeprop1} is in fact a proper subset.
%%%%%%%%%%%%%%
\begin{rem}
\begin{itemize}
\item[]
\item[1)] From Example \ref{exam11sectionultrametricpreserving}, Example \ref{exam14sectionultrametricpreserving}, and Example \ref{example24fdisnotmetric}, we now see that the relations (S1), (S2), (S3), and (S4) in Proposition \ref{newtypeprop1} are in fact proper subsets.
\item[2)] If we replace $\frac{1}{2}$ in the definition of $f$ in Example \ref{example24fdisnotmetric} by a constant $c$ (that is $f(x) = x$ if $x\leq 1$ and $f(x) = c$ if $x>1$), then $f\in \mathcal {UM}$ if and only if $c \geq \frac{1}{2}$.
\end{itemize}
\end{rem}
\section{Continuity}\label{section6}
In this section, we investigate the continuity aspect of the functions in $\mathcal M$, $\mathcal U$, $\mathcal{UM}$, and $\mathcal{MU}$. By Corollary \ref{cor16sectionmetricultrapreserving}, the continuity of metric-ultrametric-preserving functions is trivial: they are always discontinuous at $0$ and continuous else where. The continuity of metric-preserving functions has also been investigated by many authors \cite{BD, BD1, C, Do2, Do1, V, Wi} but we can still extend it further in the next theorem. \\
\indent Before we state the theorem, let us recall some definitions concerning generalized continuities. Let $f:[0,\infty)\to[0,\infty)$. Then $f$ is said to be \textit{weakly continuous} at $a\neq 0$ if and only if there are sequences $(x_n)$ and $(y_n)$ such that $(x_n)$ is strictly increasing and converges to $a$, $(y_n)$ is strictly decreasing and converges to $a$, and $f(x_n)$ and $f(y_n)$ converge to $f(a)$. If $a=0$, then $f$ is said to be \textit{weakly continuous} at $a$ if and only if there exists a strictly decreasing sequence $(y_n)$ converging to $a$ such that $f(y_n)$ converges to $f(a)$. We refer the reader to \cite{P} for weak continuity of functions defined on a more general domain. \\
\indent  Unlike weak continuity, quasi continuity and almost continuity seem to be first given in a more general domain than a subset of $\mathbb R$. So we let $X$ and $Y$ be topological spaces and let $g:X\to Y$. Then $g$ is said to be \textit{quasi continuous} at $a\in X$ if for all open sets $U$ of $X$ and $V$ of $Y$ such that $a\in U$ and $f(a)\in V$, there is a nonempty open sets $G$ of $X$ such that $G\subseteq U$ and $f(G)\subseteq V$. The function $g$ is said to be \textit{almost continuous at $x$ in the sense of Singal} (briefly a.c.S. at $x$) if for each open set $V$ of $Y$ containing $f(x)$, there exists an open set $U$ containing $x$ such that $f(U)\subseteq \inte(\overline V)$ and $g$ is said to be \textit{almost continuous at $x$ in the sense of Husain} (briefly a.c.H. at $x$) if for each open set $V$ of $Y$ containing $f(x)$, $\overline{f^{-1}(V)}$ is a neighborhood of $x$. The function $g$ is said to be \textit{quasi continuous} on $A\subseteq X$ (or a.c.S. on $A$, or a.c.H. on $A$) if it is quasi continuous at every $a\in A$ (a.c.S. at $a$ for every $a\in A$, a.c.H. at $a$ for every $a\in A$). 
%%%%%%%%%%%%%%%%%%%%%%%%%%%%%%
\begin{rem}
\item[1)] The concept of a.c.S. functions and a.c.H. functions are not equivalent as shown by Long and Carnahan \cite{LC}.
\item[2)] There are several other types of continuities in the literature. Some of them have the same name but different definition, see \cite{L} for instance, a different definition of weak continuity. We refer the reader to \cite{CR, E, M, NP, PLB} and the other references for additional details and information. 
\end{rem}
%%%%%%%%%%%%%%%%%%%%%%%%%%%%%%%%
Now we are ready to state our theorem. We will see that there is a similarity and dissimilarity between continuity of the functions in $\mathcal M$ and $\mathcal{UM}$.
\begin{thm}\label{newthmcontinuitythm}
Let $f:[0,\infty)\to[0,\infty)$ be metric-preserving. The the following statements are equivalent:
\begin{itemize}
\item[1)] $f$ is continuous at $[0,\infty)$,
\item[2)] $f$ is continuous at $0$,
\item[3)] For every $\varepsilon > 0$, there exists and $x>0$ such that $f(x) < \varepsilon$,
\item[4)] $f$ is strongly metric-preserving,
\item[5)] $f$ is uniformly continuous on $[0,\infty)$,
\item[6)] $f$ is weakly continuous on $[0,\infty)$,
\item[7)] $f$ is weakly continuous at $0$,
\item[8)] $f$ is quasi continuous on $[0,\infty)$,
\item[9)] $f$ is quasi continuous at $0$,
\item[10)] $f$ is a.c.S on $[0,\infty)$,
\item[11)] $f$ is a.c.S at $0$,
\item[12)] $f$ is a.c.H on $[0,\infty)$,
\item[13)] $f$ is a.c.H at $0$.
\end{itemize}
\end{thm}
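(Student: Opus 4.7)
The plan is to lay out the 13 conditions as a cycle (or more precisely, a fan) with condition (1) at the top and condition (3) (equivalently (2)) at the bottom as the ``collection point''. The classical equivalences (1) $\Leftrightarrow$ (2) $\Leftrightarrow$ (3) $\Leftrightarrow$ (4) $\Leftrightarrow$ (5) I would simply invoke from the references \cite{BD, BD1, C, Do2, Do1, V, Wi} already cited in the excerpt, noting that for a metric-preserving $f$, continuity at $0$ is equivalent to $\inf_{x>0} f(x)=0$ and to the stronger properties (4) and (5). The real content to verify is therefore that each of the generalized continuity conditions (6)--(13) fits into this equivalence class.

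For the ``downward'' implications (1)$\Rightarrow$(6), (1)$\Rightarrow$(8), (1)$\Rightarrow$(10), (1)$\Rightarrow$(12), I would just observe that ordinary continuity at every point implies weak continuity, quasi continuity, a.c.S. and a.c.H. at every point --- for weak continuity take $x_n = a - 1/n$ and $y_n = a + 1/n$; for quasi continuity let $G = U \cap f^{-1}(V)$, which is a nonempty open set sitting inside $U$ with $f(G) \subseteq V$; for a.c.S. note that $V \subseteq \operatorname{Int}(\overline V)$ so ordinary continuity suffices; for a.c.H. use that $f^{-1}(V)$ is itself a neighborhood of $x$, hence so is its closure. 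The restrictions (6)$\Rightarrow$(7), (8)$\Rightarrow$(9), (10)$\Rightarrow$(11), (12)$\Rightarrow$(13) are automatic since each global property evaluated at $0$ is the corresponding local property.

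To close the loop, I need the four backward implications from the ``at $0$'' versions to one of the classical conditions. For (7)$\Rightarrow$(3), given $\varepsilon>0$, take a strictly decreasing sequence $(y_n)$ converging to $0$ with $f(y_n)\to f(0)=0$, and pick $n$ large with $f(y_n)<\varepsilon$; set $x=y_n>0$. For (9)$\Rightarrow$(3), given $\varepsilon>0$, apply quasi continuity at $0$ with $U=[0,\infty)$ and $V=[0,\varepsilon)$ to get a nonempty open set $G\subseteq[0,\infty)$ with $f(G)\subseteq[0,\varepsilon)$; nonempty open sets contain a positive point. For (11)$\Rightarrow$(2), the key observation is that the basic open neighborhoods $[0,\varepsilon)$ of $0$ in the codomain $[0,\infty)$ are \emph{regular open}: $\operatorname{Int}\left(\overline{[0,\varepsilon)}\right)=\operatorname{Int}\left([0,\varepsilon]\right)=[0,\varepsilon)$, so a.c.S.\ at $0$ reduces literally to ordinary continuity at $0$. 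For (13)$\Rightarrow$(3), given $\varepsilon>0$ apply a.c.H.\ with $V=[0,\varepsilon)$ to find $\delta>0$ with $[0,\delta)\subseteq\overline{f^{-1}(V)}$; any $x\in(0,\delta)$ is either in $f^{-1}(V)$, or is approximated by positive points of $f^{-1}(V)$, and in either case furnishes a positive point where $f<\varepsilon$.

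No single step looks technically hard; the main risk is topological bookkeeping, in particular (a)~making sure the closure and interior operations in the definitions of a.c.S.\ and a.c.H.\ are taken in the subspace $[0,\infty)$ rather than in $\mathbb{R}$, and (b)~being precise that the nonempty open set $G$ produced by quasi continuity and the neighborhood produced by a.c.H.\ actually contain a strictly \emph{positive} point (needed because condition (3) demands $x>0$). Both points are handled by the fact that nonempty open subsets of $[0,\infty)$ always contain a nondegenerate interval, so the existence of positive witnesses is automatic. With these care points handled, the whole theorem collapses to a short web of implications converging on the classical equivalence (1)--(5).
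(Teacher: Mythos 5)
Your proposal is correct and takes essentially the same route as the paper: a hub-and-spoke reduction in which the trivial downward implications from ordinary continuity to the generalized continuities are noted, and each of the localized conditions 7), 9), 11), 13) is pulled back to condition 3) by exactly the arguments you describe (including the observation that $\inte\left(\overline{[0,\varepsilon)}\right)=[0,\varepsilon)$ in $[0,\infty)$ and that the relevant nonempty open sets or preimages must contain a positive point). The only cosmetic difference is that the paper cites the literature only for the equivalence of 1)--4) and proves 2)$\Rightarrow$5) itself via the inequality $|f(a)-f(b)|\leq f(|a-b|)$ obtained from the triangle-triplet characterization, whereas you defer 5) to the references as well; this does not affect correctness.
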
 
%%%%%%%%%%%%%%%%%%%%%%%%%%%%%%55
\begin{proof}
The equivalence of 1), 2), 3), and 4) is proved in \cite{C, Do2}. With a bit more observation, we can prove that 1) to 11) are all equivalent. First we notice that
\begin{equation}\label{newthmcontinuityeq1}
\left|f(a)-f(b)\right| \leq f\left(\left|a-b\right|\right)\quad\text{for all $a, b\in [0,\infty)$}.
\end{equation} 
To prove (\ref{newthmcontinuityeq1}), we let $a, b\in [0,\infty)$. Then $\left(a, b, \left|a-b\right|\right)$ is a triangle triplet. So by Theorem \ref{thm15ultrametricpreserv}, $\left(f(a), f(b), f\left(\left|a-b\right|\right)\right)$ is a triangle triplet. Therefore 
$$
f(a)\leq f(b)+f\left(\left|a-b\right|\right)\quad\text{and}\quad f(b)\leq f(a)+f\left(\left|a-b\right|\right).
$$
Thus $\left|f(a)-f(b)\right|\leq f\left(\left|a-b\right|\right)$, as asserted. Now we will prove that 2), 5), 6), 7), and 3) are equivalent.\vspace{0.3cm}\\
\indent 2)$\to$ 5) Assume that $f$ is continuous at $0$. Let $\varepsilon > 0$. Then there exists a $\delta>0$ such that 
\begin{equation}\label{newthmcontinuityeq2}
\text{if $a\in [0,\delta)$, then $f(a) < \varepsilon$}.
\end{equation}
Now if $x, y\in [0,\infty)$ and $|x-y| < \delta$, then by (\ref{newthmcontinuityeq1}) and (\ref{newthmcontinuityeq2}), we obtain
$$
\left|f(x)-f(y)\right| \leq f\left(\left|x-y\right|\right) < \varepsilon. 
$$
This shows that $f$ is uniformly continuous on $[0,\infty)$. \vspace{0.3cm}\\
\indent It is easy to see that 5) implies 6) and 6) implies 7). \vspace{0.3cm}\\
\indent 7)$\to$3) We assume that 7) holds. Let $(x_n)$ be the sequence in $(0,\infty)$ such that $(x_n)$ is strictly decreasing and converges to $0$, and $(f(x_n))$ converges to $f(0) = 0$. Therefore if $\varepsilon > 0$ is given, there exists $N\in \mathbb N$ such that 
$$
f(x_N) = f(x_N)-f(0) < \varepsilon.
$$
This proves 3). Since 3) and 2) are equivalent, we see that 2), 5), 6), 7) and 3) are equivalent, as asserted.\vspace{0.3cm}\\
\indent It is true in general that every continuous function is quasi continuous. So it is easy to see that 1) implies 8) and 8) implies 9). Next assume that 9) holds. To show 3), let $\varepsilon > 0$ be given. Let $V = U = [0,\varepsilon)$. Then $V$ and $U$ are open set in $[0,\infty)$ containing $f(0) = 0$ and $0$, respectively. Since $f$ is quasi continuous at $0$, there exists a nonempty open set $G\subseteq U$ such that $f(G)\subseteq V$. Now we can choose $x\in G-\{0\}$ so that $x>0$ and $f(x)<\varepsilon$. This gives 3). Since 1) and 3) are equivalent, we obtain that 1), 8), 9), and 3) are equivalent. Similarly, it is easy to see that 1) implies 10), 10) implies 11), 1) implies 12), and 12) implies 13). Since 1) and 3) are equivalent, it now suffices to show that each of 11) and 13) implies 3). First assume that 11) holds. Let $\varepsilon > 0$ and let $V = [0,\varepsilon)$. Then $V$ is open in $[0,\infty)$ and contains $f(0)$. Since $f$ is a.c.S. at $0$, there exists an open set $U$ containing $0$ such that 
$$
f(U) \subseteq \inte(\overline V) = \inte [0,\varepsilon] = [0,\varepsilon). 
$$ 
Now we can choose $x\in U-\{0\}$ so that $x>0$ and $f(x)<\varepsilon$. Similarly if 13) holds, then $\overline{f^{-1}(V)}$ is a neighborhood of $0$, so $f^{-1}(V)\neq \{0\}$, and therefore we can choose $x\in f^{-1}(V)-\{0\}$ so that $f(x)<\varepsilon$ and $x>0$. This completes the proof.
%\qed\vspace{0.3cm}\\
\end{proof}
%%%%%%%%%%%%%%%%%%%%%%%%%%%%%%%%%%%%%%%%%%%5
%%%%%%%%%%%%%%%%%%%%%%%%%%%%%%%%%%%%%%%%%%%%
The function $f$ in Example \ref{example24fdisnotmetric} shows that in the case of ultrametric-metric-preserving functions, the global continuity on $[0,\infty)$ and the local continuity at $0$ are not equivalent. In addition, the uniform continuity on $[0,\infty)$ and continuity on $[0,\infty)$ are not equivalent as can be seen from the function $f$ in Example \ref{exam11sectionultrametricpreserving}. However, we still have the following result for the continuity at $0$. 
\begin{thm}\label{equvalentultrametricthm}
Let $f$ be ultrametric-metric-preserving. Then the following statements are equivalent:
\begin{itemize}
\item[(i)] $f$ is continuous at $0$,
\item[(ii)] $f$ is weakly continuous at $0$,
\item[(iii)] for every $\varepsilon > 0$, there exists an $x>0$ such that $f(x) < \varepsilon$,
\item[(iv)] $f$ is quasi continuous at $0$,
\item[(v)] $f$ is a.c.S. at $0$,
\item[(vi)] $f$ is a.c.H. at $0$.
\end{itemize}
\end{thm}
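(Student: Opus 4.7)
The plan is to run the cycle $(i)\Rightarrow(ii)\Rightarrow(iii)\Rightarrow(i)$ and then slot each of the three generalized-continuity conditions $(iv)$, $(v)$, $(vi)$ into the web by showing $(i)$ implies it and it implies $(iii)$. The engine will be Theorem \ref{ultrametricthm}(iii): for any $f \in \mathcal{UM}$ and $0 \leq a \leq b$, $f(a) \leq 2 f(b)$. I will also use that $f$ is amenable by Corollary \ref{corafteramenablelemma}, so $f(0)=0$.

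For the core cycle, $(i)\Rightarrow(ii)$ is immediate from the definitions, and $(ii)\Rightarrow(iii)$ follows because weak continuity at $0$ supplies a strictly decreasing sequence $(y_n)$ with $y_n \to 0$ and $f(y_n)\to f(0)=0$, so $f(y_N) < \varepsilon$ for large $N$. The heart of the argument is $(iii)\Rightarrow(i)$: given $\varepsilon > 0$, I use $(iii)$ to pick $x_0 > 0$ with $f(x_0) < \varepsilon/2$ and set $\delta = x_0$; for any $a \in [0,\delta)$ we have $0 \leq a \leq x_0$, and Theorem \ref{ultrametricthm}(iii) yields $f(a) \leq 2 f(x_0) < \varepsilon$, which is exactly continuity at $0$.

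For $(iv)$, $(v)$, $(vi)$, the implication from $(i)$ is standard since continuous functions are quasi continuous, a.c.S., and a.c.H. in general. For the three converses to $(iii)$, I will adapt the test-set trick from the proof of Theorem \ref{newthmcontinuitythm}: fix $\varepsilon > 0$ and take the open set $V = [0,\varepsilon)$, which contains $f(0)=0$. Quasi continuity at $0$, applied with $U = V$, produces a nonempty open $G \subseteq U$ with $f(G) \subseteq V$, so any $x \in G \setminus \{0\}$ witnesses $(iii)$; a.c.S. at $0$ produces an open $U \ni 0$ with $f(U) \subseteq \inte(\overline{V}) = [0,\varepsilon)$, and some $x \in U \setminus \{0\}$ does the job; a.c.H. at $0$ makes $\overline{f^{-1}(V)}$ a neighborhood of $0$, so $f^{-1}(V) \cap (0,\infty) \neq \emptyset$, again yielding $(iii)$.

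The main conceptual point to highlight — really the reason the present theorem is weaker than Theorem \ref{newthmcontinuitythm} — is the $(iii)\Rightarrow(i)$ step. For metric-preserving functions the Lipschitz-type bound $|f(a)-f(b)| \leq f(|a-b|)$ promotes continuity at $0$ all the way to uniform continuity on $[0,\infty)$, but in the ultrametric-metric-preserving setting only the one-sided estimate $f(a) \leq 2 f(b)$ for $a \leq b$ is available, and it only transports smallness downward to $0$. That is why this theorem must stop at local continuity at $0$ and omit both global continuity and uniform continuity on $[0,\infty)$; Examples \ref{exam11sectionultrametricpreserving} and \ref{example24fdisnotmetric} show these stronger properties genuinely fail for $\mathcal{UM}$.
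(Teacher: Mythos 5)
Your proposal is correct and follows essentially the same route as the paper: the core is the implication from (iii) to (i) via the bound $f(a)\leq 2f(b)$ for $0\leq a\leq b$ from Theorem \ref{ultrametricthm}, with amenability supplied by Corollary \ref{corafteramenablelemma}, and the conditions (iv)--(vi) are handled by the same test-set arguments recycled from the proof of Theorem \ref{newthmcontinuitythm}. No gaps; your closing remark on why the equivalence stops at continuity at $0$ matches the paper's discussion preceding the theorem.
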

\begin{proof}
We have that (i) implies (ii) is true in general. By the same argument that 7) implies 3) in Theorem \ref{newthmcontinuitythm}, we see that (ii) implies (iii). Next assume that (iii) holds. To show that $f$ is continuous at $0$, let $\varepsilon > 0$ be given. Then by (iii), there exists $x_0 > 0$ such that $f(x_0) < \frac{\varepsilon}{2}$. Let $\delta = x_0$ and let $x\in [0,\delta)$. Since $0\leq x <\delta$ and $f\in \mathcal{UM}$, we obtain by Corollary \ref{corafteramenablelemma} and Theorem \ref{ultrametricthm} that 
$$
\left|f(x)-f(0)\right| = f(x) \leq 2f(\delta) = 2f(x_0) < \varepsilon.
$$
This gives (i). Therefore (i), (ii), and (iii) are equivalent. Since (i) implies (iv), (v), and (vi), it suffices to show that each of (iv), (v), and (vi) implies (iii). Since $f\in \mathcal{UM}$, it is amenable and we can use the same argument of the proof of Theorem \ref{newthmcontinuitythm} to show that (iv) implies (iii) (the same as 9) implies 3)), (v) implies (iii) (the same as 11) implies 3)), and (vi) implies (iii) (the same as 13) implies 3)). This completes the proof. 
\end{proof}
%%%%%%%%%%%%%%%%%%%%%%%%%%%%%%%%%55
\begin{cor}
Let $f\in \mathcal{UM}$. If $f$ is discontinuous at $0$, then there exists an $\varepsilon > 0$ such that $f(x) > \varepsilon$ for all $x>0$. 
\end{cor}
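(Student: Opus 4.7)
The plan is to prove the corollary by contrapositive, leveraging the equivalence of (i) and (iii) in Theorem \ref{equvalentultrametricthm}. Specifically, I would suppose that no such $\varepsilon$ exists and deduce that $f$ is continuous at $0$, contradicting the hypothesis.

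First I would translate the negation of the conclusion carefully. The statement ``there exists $\varepsilon>0$ such that $f(x)>\varepsilon$ for all $x>0$'' fails precisely when, for every $\varepsilon>0$, there exists some $x_{\varepsilon}>0$ with $f(x_{\varepsilon})\leq\varepsilon$. Given any target $\eta>0$, I can then apply this with $\varepsilon=\eta/2$ to obtain $x>0$ with $f(x)\leq\eta/2<\eta$. Thus the negation of the conclusion of the corollary is exactly condition (iii) of Theorem \ref{equvalentultrametricthm}.

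Next I would invoke Theorem \ref{equvalentultrametricthm}, which tells us (iii) is equivalent to (i), i.e.\ to continuity of $f$ at $0$. This would contradict the assumption that $f$ is discontinuous at $0$, completing the proof. Equivalently (and perhaps cleaner to write), I would argue directly: since $f$ is discontinuous at $0$, condition (iii) of Theorem \ref{equvalentultrametricthm} fails, so there is some $\varepsilon_0>0$ with $f(x)\geq\varepsilon_0$ for every $x>0$; setting $\varepsilon=\varepsilon_0/2$ yields $f(x)>\varepsilon$ for all $x>0$, as required.

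There is essentially no technical obstacle here; the corollary is a direct logical reformulation of the equivalence (i) $\Longleftrightarrow$ (iii) already established in Theorem \ref{equvalentultrametricthm}. The only minor care needed is the strict-versus-nonstrict inequality, handled by the halving trick above.
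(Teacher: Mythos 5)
Your proposal is correct and takes essentially the same route as the paper, which simply cites the equivalence of (i) and (iii) in Theorem \ref{equvalentultrametricthm}; your extra care with the strict versus nonstrict inequality is a harmless refinement of the same argument.
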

%%%%%%%%%%%%%%%%%%%%%%%%%%%%%%%%be%%%%%%%%%%%%%%%%%%55
\begin{proof}
This follows from (i) and (iii) in Theorem \ref{equvalentultrametricthm}.
\end{proof}
\begin{exam}
Let $f, g:[0,\infty)\to[0,\infty)$ be given by
$$
f(x) = \begin{cases}
x,\quad&\text{$x\leq 1$};\\
1,\quad&\text{$x > 1$ and $x\in \mathbb Q$};\\
2,\quad&\text{$x > 1$ and $x\notin \mathbb Q$},
\end{cases}\quad\quad
g(x) = \begin{cases}
x, \quad &x<1;\\
2, \quad &x\geq 1.
\end{cases}
$$
\end{exam}
First we will show that $f\in \mathcal{UM}$ by applying Theorem \ref{ultrametricthm}. So we let $0\leq a\leq b$. If $b>1$, then $2f(b) \geq 2 \geq f(x)$ for every $x\in [0,\infty)$. In particular, $2f(b)\geq f(a)$. If $b\leq 1$, then $f(a) = a \leq b \leq 2b = 2f(b)$. So $f\in \mathcal{UM}$. It is easy to see that $f$ is weakly continuous at $1$ but is not continuous at $1$. In fact $f$ is weakly continuous at every $x\geq 0$ and is not continuous at any $x\geq 1$. This shows that we cannot replace continuity at $0$ in Theorem \ref{equvalentultrametricthm} by continuity at any other point $x\neq 0$. Similarly, $g\in \mathcal{UM}$ and quasi continuous on $[0,\infty)$ but $g$ is not continuous at $1$.\\[0.3cm]
\noindent \textbf{Acknowledgment}\\[0.2cm]
The first author receives financial support from The Thailand Research Fund, research grant number TRG5680052. The author takes this opportunity to thanks The Thailand Research Fund for the support.
%%%%%%%%%%%%%%%%%%%%%%%%%%%%%%%%%%%%%%%%%%%%%%%55

\end{document}